\definecolor{grayDisabled}{rgb}{0.75, 0.75, 0.75}
\newcolumntype{d}{>{\columncolor{grayDisabled}}r}
\newtheorem{theorem}{Theorem}[section]
\newtheorem{lemma}[theorem]{Lemma}
\newtheorem*{theorem*}{Theorem}
\theoremstyle{definition}
\newtheorem{definition}{Definition}[section]
\newtheorem{conjecture}[theorem]{Conjecture}
\theoremstyle{remark}
\newcommand{\ra}{\rangle}
\newcommand{\la}{\langle }
\begin{document}

\title[Lucas sequences in t-uniform simplicial complexes]{Lucas sequences in t-uniform simplicial complexes}

\author[Ioana-Claudia Laz\u{a}r]{
Ioana-Claudia Laz\u{a}r\\
Politehnica University of Timi\c{s}oara, Dept. of Mathematics,\\
Victoriei Square $2$, $300006$-Timi\c{s}oara, Romania\\
E-mail address: ioana.lazar@upt.ro}

\date{}

\hyphenation{i-so-pe-ri-me-tric}

\begin{abstract}

We introduce $t$-uniform simplicial complexes and we show that the lengths of spheres in such complexes are the terms of certain Lucas sequences.
We find optimal constants for the linear isoperimetric inequality in the hyperbolic case.

\hspace{0 mm} \textbf{2010 Mathematics Subject Classification}: 05E45, 20F67, 11B37.

\hspace{0 mm} \textbf{Keywords}: simplicial complex, Lucas sequence, recurrence relation, isoperimetric inequality, minimal filling diagram
\end{abstract}

\pagestyle{myheadings}

\markboth{}{}

\vspace{-10pt}

\maketitle

\section{Introduction}

Isoperimetric inequalities relate the length of closed curves to the infimal area of the discs which they bound.
Every closed loop of length $L$ in the Euclidean plane bounds a disc whose area is less than $L^{2} / 4 \pi$, and this bound is optimal.
Thus one has a quadratic isoperimetric inequality for loops in Euclidean space.
In contrast, loops in real hyperbolic space satisfy a linear isoperimetric inequality: there is a constant $C$ such that every closed loop of length $L$ in hyperbolic space bounds a disc whose area is less than or equal to $C \cdot L$.

With a suitable notion of area, a geodesic space $X$ is $\delta$-hyperbolic if and only if loops in $X$ satisfy a linear isoperimetric inequality (see \cite{BH}, chapter $III.H$, page $417$ and page $419$).
For loops in arbitrary CAT(0) spaces, however, there is a quadratic isoperimetric inequality (see \cite{BH}, chapter $III.H$, page $414$).
Osajda introduced in \cite{O-8loc} a local combinatorial condition called $8$-location implying Gromov hyperbolicity of the universal cover (see \cite{L-8loc}).
A related curvature condition, called $5/9$-condition, also implies Gromov hyperbolicity (see \cite{L-8loc2}).
Both $8$-located complexes and $5/9$-complexes satisfy therefore, under the additional hypothesis of simply connectedness, a linear isoperimetric inequality.

One can also express curvature using a condition called local $k$-largeness which was introduced independently by Chepoi \cite{Ch} (under the name of bridged complexes) and by Januszkiewicz-Swiatkowski \cite{JS1}.
A flag simplicial complex is locally $k$-large if its links do not contain essential loops of length less than $k, k \ge 4$.
Cycles in systolic complexes satisfy a quadratic isoperimetric inequality (see \cite{JS1}).
In \cite{E1} explicit constants are provided presenting the optimal estimate on the area of a systolic disc.
In systolic complexes the isoperimetric function for $2$-spherical cycles (the so called second isoperimetric function) is linear (see \cite{JS2}).
In \cite{ChaCHO} it is shown that meshed graphs (thus, in particular, weakly modular graphs) satisfy a quadratic isoperimetric inequality.

The purpose of the current paper is to show that the lengths of spheres in $t$-uniform simplicial complexes are the terms of certain Lucas sequences.
Using this result we find a connection between the area and the length of spheres in such complexes.
For $t \ge 7$, we find optimal constants for the linear isoperimetric inequality in terms of $t$.
We also study $t$-uniform simplicial complexes for $t \le 6$.

We consider certain loops called spheres which are the "roundest" loops.
Intuitively, any loop of the same length which is not a sphere contains less area.
Therefore we consider only the lengths of spheres, not of all loops.
By this choice we can find the best constant for the isoperimetric inequality.
So we do not prove the isoperimetric inequality for the simplicial complex.
Instead we find the best candidate for the constant of the isoperimetric inequality.
The idea is to get a better understanding of how the isoperimetric inequality behaves as the vertices have more or less neighbours.

\textbf{Acknowledgements}.
The author would like to thank Damian Osajda for introducing her to the subject.
This work was partially supported by the grant $346300$ for IMPAN from the Simons Foundation and the matching $2015-2019$ Polish MNiSW fund.

\section{Preliminaries}

\subsection{Simplicial complexes}

Let $X$ be a simplicial complex.
We denote by $X^{(k)}$ the $k$-skeleton of $X, 0 \le k < \dim X$.
A subcomplex $L$ in $X$ is called \emph{full} as a subcomplex of $X$ if any simplex of $X$ spanned by a set of vertices in $L$, is a simplex of $L$.
For a set $A = \{ v_{1}, ..., v_{k} \}$ of vertices of $X$, by $\langle A \rangle$ or by $\langle v_{1}, ..., v_{k} \rangle$ we denote the \emph{span} of $A$, i.e. the smallest full subcomplex of $X$ that contains $A$.
We write $v \sim v'$ if $\langle v,v' \rangle \in X$ (it can happen that $v = v'$).
We write $v \nsim v'$ if $\langle v,v' \rangle \notin X$.
We call $X$ {\it flag} if any finite set of vertices, which are pairwise connected by edges of $X$, spans a simplex of $X$.

A {\it cycle} ({\it loop}) $\gamma$ in $X$ is a subcomplex of $X$ isomorphic to a triangulation of $S^{1}$.
A \emph{full cycle} in $X$ is a cycle that is full as a subcomplex of $X$.
A $k$-\emph{wheel} in $X$ $(v_{0}; v_{1}, ..., v_{k})$ (where $v_{i}, i \in \{0,..., k\}$ are vertices of $X$) is a subcomplex of $X$ such that $(v_{1}, ..., v_{k})$ is a full cycle and $v_{0} \sim v_{1}, ..., v_{k}$.
The \emph{length} of $\gamma$ (denoted by $|\gamma|$) is the number of edges of $\gamma$.

We define the \emph{metric} on the $0$-skeleton of $X$ as the number of edges in the shortest $1$-skeleton path joining two given vertices.

Let $\sigma$ be a simplex of $X$.
The \emph{link} of $X$ at $\sigma$, denoted $X_{\sigma}$, is the subcomplex of $X$ consisting of all simplices of $X$ which are disjoint from $\sigma$ and which, together with $\sigma$, span a simplex of $X$.
We call a flag simplicial complex \emph{k-large} if there are no full $j$-cycles in $X$, for $j < k$.
We say $X$ is \emph{locally k-large} if all its links are $k$-large.
We call a vertex of $X$ \emph{k-large} if its link is $k$-large.

\begin{definition}\label{def:simplicial-map}
A \emph{simplicial map} $f : X \rightarrow Y$ between simplicial complexes $X$ and $Y$ is a map which sends vertices to vertices, and whenever vertices $v_{0}, ..., v_{k} \in X$ span a simplex $\sigma$ of $X$ then their images span a simplex $\tau$ of $Y$ and we have $f(\sigma) = \tau$.
Therefore a simplicial map is determined by its values on the vertex set of $X$.
A simplicial map is \emph{nondegenerate} if it is injective on each simplex.
\end{definition}

\begin{definition}\label{def:filling-diagram}
Let $\gamma$ be a cycle in $X$.
A \emph{filling diagram} for $\gamma$ is a simplicial map $f : D \rightarrow X$ where $D$ is a triangulated $2$-disc, and $f | _{\partial D}$ maps $\partial D$ isomorphically onto $\gamma$.
We denote a filling diagram for $\gamma$ by $(D,f)$ and we say it is
\begin{itemize}
    \item \emph{minimal} if $D$ has minimal area (it consists of the least possible number of $2$-simplices among filling diagrams for $\gamma$);
    \item \emph{nondegenerate} if $f$ is a nondegenerate map;
    \item \emph{locally $k$-large} if $D$ is a locally $k$-large simplicial complex.
\end{itemize}
\end{definition}

\begin{lemma}\label{lemma:filling-diagram}
Let $X$ be a simplicial complex and let $\gamma$ be a homotopically trivial loop in $X$.
Then:
\begin{enumerate}
    \item there exists a filling diagram $(D, f)$ for $\gamma$ (see \cite{Ch} - Lemma $5.1$, \cite{JS1} - Lemma $1.6$ and \cite{Pr} - Theorem $2.7$);
    \item any minimal filling diagram for $\gamma$ is simplicial and nondegenerate (see \cite{Ch} - Lemma $5.1$, \cite{JS1} - Lemma $1.6$, Lemma $1.7$ and \cite{Pr} - Theorem $2.7$).
\end{enumerate}
\end{lemma}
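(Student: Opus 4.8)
The plan is to establish the two parts by different means: part (1) by soft topology --- null-homotopy together with simplicial approximation --- and part (2) by an extremal argument on the area of the diagram.

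For (1), I would begin from the hypothesis that $\gamma$ bounds a disc: the inclusion of the geometric realization of $\gamma$ into $|X|$ extends to a continuous map $g$ from the closed $2$-disc $D^{2}$ with $g|_{\partial D^{2}}$ parametrizing $\gamma$. Since $g$ is already simplicial on $\partial D^{2}$ (it maps onto the subcomplex $\gamma$), simplicial approximation performed \emph{relative to} the boundary produces a triangulation $D$ of the disc, coinciding with the given $|\gamma|$-gon triangulation on $\partial D^{2}$, together with a simplicial map $f : D \rightarrow X$ that agrees with $g$ on $\partial D$; then $f|_{\partial D}$ is an isomorphism onto $\gamma$, so $(D,f)$ is the required filling diagram. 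Equivalently, and closer to the combinatorial arguments of \cite{JS1} and \cite{Pr}, one works in the edge-path group: homotopic triviality of $\gamma$ means its edge loop reduces to the constant loop by finitely many elementary moves (deleting backtracks, and inserting or deleting the boundary loop of a $2$-simplex of $X$), and transcribing this reduction as a two-dimensional picture yields $D$ together with its labelling map $f$.

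For (2), I would take a filling diagram $(D,f)$ of minimal area and suppose, towards a contradiction, that it fails to be simplicial (two simplices of $D$ share more than one common face, or some simplex of $D$ has a repeated vertex) or that $f$ is degenerate on some simplex. A case analysis shows that such a diagram must contain a local configuration of one of the following types: a $2$-simplex $\tau$ on which $f$ collapses an edge; two distinct triangles of $D$ sharing two edges; or two triangles of $D$ sharing an edge and sent by $f$ onto the same $2$-simplex of $X$. In each case there is a surgery on $(D,f)$ --- excising $\tau$ and identifying the two boundary arcs that $f$ already identifies, or deleting the offending pair of triangles and gluing along their shared edges --- that keeps $D$ a triangulated $2$-disc, leaves the triangulation and $f$ unchanged on $\partial D$, and strictly decreases the number of $2$-simplices. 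This contradicts minimality, whence $(D,f)$ is simplicial and nondegenerate.

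The hard part will be part (2): one must check that the list of local configurations above is exhaustive, so that every failure of simpliciality or of nondegeneracy forces one of them; that each surgery really outputs a triangulated disc with the original boundary --- note that $\partial D$ is never implicated, since $f|_{\partial D}$ is injective by the definition of a filling diagram; and that the number of $2$-simplices genuinely drops at each step. These verifications are routine but somewhat tedious, and are carried out in \cite{Ch} (Lemma $5.1$), \cite{JS1} (Lemmas $1.6$ and $1.7$) and \cite{Pr} (Theorem $2.7$), which I would follow for the complete proof.
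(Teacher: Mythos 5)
The paper offers no proof of this lemma at all---it is quoted from the literature, with the statement itself pointing to \cite{Ch} (Lemma $5.1$), \cite{JS1} (Lemmas $1.6$ and $1.7$) and \cite{Pr} (Theorem $2.7$)---and your sketch correctly reproduces the standard arguments of exactly those sources: relative simplicial approximation (or edge-path reduction) for existence, and area-decreasing surgeries on a minimal diagram, with boundary preserved, for simpliciality and nondegeneracy. Since you defer the exhaustive case analysis to the same references the paper cites, your proposal is correct and in line with the paper's treatment.
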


Let $D$ be a simplicial disc.
We denote by $C$ the cycle bounding $D$ and by $\rm{Area} C$ the area of $D$.
We denote by $V_{i}$ and $V_{b}$ the numbers of internal and boundary vertices of $D$, respectively.
Then: $\rm{Area} C = 2 V_{i} + V_{b} - 2 = |C| + 2 (V_{i} - 1)$ (Pick's formula).
In particular, the area of a simplicial disc depends only on the numbers of its internal and boundary vertices.

\begin{definition}\label{def:t-uniform}
Let $X$ be a flag, simply connected simplicial complex and let $\gamma$ be a loop in $X$.
We call $X$ \emph{t-uniform}, $t \ge 4$, if in any minimal filling diagram $(D,f)$ for $\gamma$, for any interior vertex $v$ of $D$, we have $|D_{v}| = t$ (i.e. any interior vertex $v$ of $D$ has $t$ neighbours).
\end{definition}

Let $X$ be a $t$-uniform simplicial complex and let $\gamma$ be a loop in $X$.
Let $(D,f)$ be a minimal filling diagram for $\gamma$ and let $v$ be a vertex of $D$.
We call the \textit{sphere} centered at $v$ of radius $n$ the set of edges spanned by the vertices at distance $n$ from $v$, $n \ge 0$.
We denote it by $S_n^t$.
We call the \textit{area} of a sphere the number of triangles inside the sphere.
We denote it by $A_{n}^{t}$.
We call the \textit{length} of a sphere the number of edges on the sphere.
We denote it by $|S_n^t|$.

If $Z$ is a set of vertices, we denote by $|Z|$ the number of its vertices.

\subsection{Lucas sequences}

\subsubsection{General considerations}

Given two integer parameters $P$ and $Q$, the Lucas sequences of the first kind $(U_n(P,Q))_{n \ge 0}$ and of the second kind $(V_n(P,Q))_{n \ge 0}$ are defined by the following recurrence relations (see \cite{Ri})
\begin{itemize}
    \item $U_0(P,Q)=0,$
    \item $U_1(P,Q)=1,$
    \item $U_n(P,Q) = P \cdot U_{n-1}(P,Q) - Q \cdot U_{n-2}(P,Q)$, for $n>1$
\end{itemize}
and
\begin{itemize}
    \item $V_0(P,Q)=2,$
    \item $V_1(P,Q)=P,$
    \item $V_n(P,Q) = P \cdot V_{n-1}(P,Q) - Q \cdot V_{n-2}(P,Q)$, for $n>1$.
\end{itemize}

The characteristic equation of the recurrence relation for the Lucas sequences \\ $U_n(P,Q)$ and $V_n(P,Q)$ is
$$x^2 - P \cdot x + Q = 0$$
It has the discriminant $D = P^2 - 4Q$ and the roots
$$a = \cfrac{P + \sqrt{P^2 - 4Q}}{2},\quad
b = \cfrac{P - \sqrt{P^2 - 4Q}}{2}$$

We discuss two cases.

$\bullet$ If $D\neq 0$, then $a$ and $b$ are distinct and we have
$$a^n = \cfrac{V_n + U_n \sqrt{D}}{2},\quad
b^n = \cfrac{V_n - U_n \sqrt{D}}{2}$$
Then the terms of the Lucas sequences can be expressed in terms of $a$ and $b$ as follows
\begin{equation}\label{eq:lucas-n-term-using-a-b}
    U_n = \cfrac{a^n - b^n}{a-b},\quad
    V_n = a^n + b^n
\end{equation}

$\bullet$ If $D = 0$, then $P = 2S$ and $Q = S^2$ for some integer $S$ so that $a = b = S$.
In this case we have
\begin{equation}\label{eq:lucas-n-term-using-s}
    U_n(P,Q) = U_n(2S, S^2) = nS^{n-1}
\end{equation}
$$V_n(P,Q) = V_n(2S, S^2) = 2S^n$$

\subsubsection{Fibonacci sequence}

The Fibonacci sequence $(F)_{n \ge 0}$ is a special case of Lucas sequence of the first kind for $P=1$, $Q=-1$: $F_n = U_n(1,-1)$.
The recurrence relation is
\begin{center}
    $F_n = F_{n-1} + F_{n-2}$
\end{center}

The initial terms of the Fibonacci sequence are $0, 1, 1, 2, 3, 5, 8, 13, 21, 34, 55, ...$

The limit of the ratio of two successive terms of the Fibonacci sequence is the \textit{golden ratio}
$$\lim_{n\to\infty} \cfrac{F_{n+1}}{F_n} = \varphi = \cfrac{1 + \sqrt{5}}{2}$$

\subsubsection{The bisection of the Fibonacci sequence}

The bisection of the Fibonacci sequence $(B)_{n \ge 0}$ contains the terms on even positions of the Fibonacci sequence.
It is the Lucas sequence of the first kind for $P=3$, $Q=1$: $B_n = U_n(3,1)$.
The recurrence relation is
$$B_n = 3 \cdot B_{n-1} - B_{n-2}$$

The initial terms of the bisection of the Fibonacci sequence are $0, 1, 3, 8, 21, 55, ...$

Expressed in terms of the Fibonacci sequence, we have $B_n = F_{2n}$.

\section{Lucas sequences in simplicial complexes}
 
The main goal of this section is to find, in terms of $t$, the best constant for the isoperimetric inequality for a $t$-uniform simplicial complex $X$, $t \ge 6$.
The only loops we consider are the spheres inside the disc of a minimal filling diagram associated to a loop of $X$.
Namely, we compute for $t \ge 7$ the limit of the ratio $\cfrac{A_n^t}{|S_n^t|}$ as $n$ goes to infinity (Theorem \ref{theorem:area-length-ratio-t-unif}).
For $t = 6$ we show that the ratio $\cfrac{A_n^t}{|S_n^t|^2}$ is constant (Theorem \ref{theorem:area-length-ratio-6-unif}).

In section \ref{sec:examples}, we give examples of $t$-uniform simplicial complexes.
In section \ref{sec:lucas-t-unif-connection} we find the relation between the lengths of spheres in $t$-uniform simplicial complexes and the terms of certain Lucas sequences (Theorem \ref{theorem:sphere-lucas-first-kind}).
Moreover, we express the area of a sphere in terms of the lengths of certain spheres (Theorem \ref{theorem:area-in-sphere}).
In section \ref{sec:bisection-fib} we study the bisection of the Fibonacci sequence in $7$-uniform simplicial complexes.
In section \ref{sec:lucas-seq-t-unif} we analyze Lucas sequences for $Q=1$ in $t$-uniform simplicial complexes, $t \ge 4$.
In section \ref{sec:tables-of-sequences} we present tables of sequences involving lengths and areas of spheres in $t$-uniform simplicial complexes, $4 \le t \le 10$.

\begin{figure}[ht]
    \centering

    \includegraphics[height=0.44\textheight]{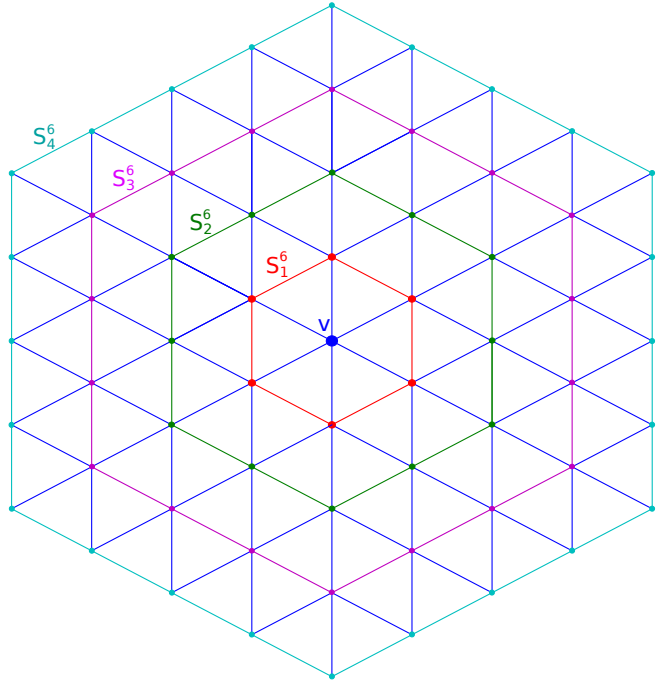}
    \caption{A minimal filling diagram of a $6$-uniform simplicial complex}
    \label{fig:all-6-levels-4}
\end{figure}

\subsection{Examples}\label{sec:examples}

We start by presenting a few examples of $t$-uniform simplicial complexes.
Namely,
\begin{itemize}
    \item for $t = 4$: an octahedron (Figure \ref{fig:octahedron});
    \item for $t = 5$: an icosahedron (Figure \ref{fig:icosahedron});
    \item for $t = 6$: the regular tessellation of the Euclidean plane by equilateral triangles (Figure \ref{fig:all-6-levels-4});
    \item for $t \ge 7$: the complex is hyperbolic (Figures \ref{fig:all-7-levels-5} and \ref{fig:all-8-levels-5});
    \begin{itemize}
        \item if all triangles are equilateral with an angle measuring $\pi/3$ at each vertex, then the sum of the measures of the angles around each vertex is $t\pi/3$, which is bigger than $2\pi$;
        \item if the sum of the measures of the angles around each vertex is equal to $2\pi$, then each triangle has angles of measure $2\pi/t$ at each vertex, which is less then $\pi/3$.
    \end{itemize}
\end{itemize}

\begin{figure}[p]
    \centering
    \includegraphics[height=0.44\textheight]{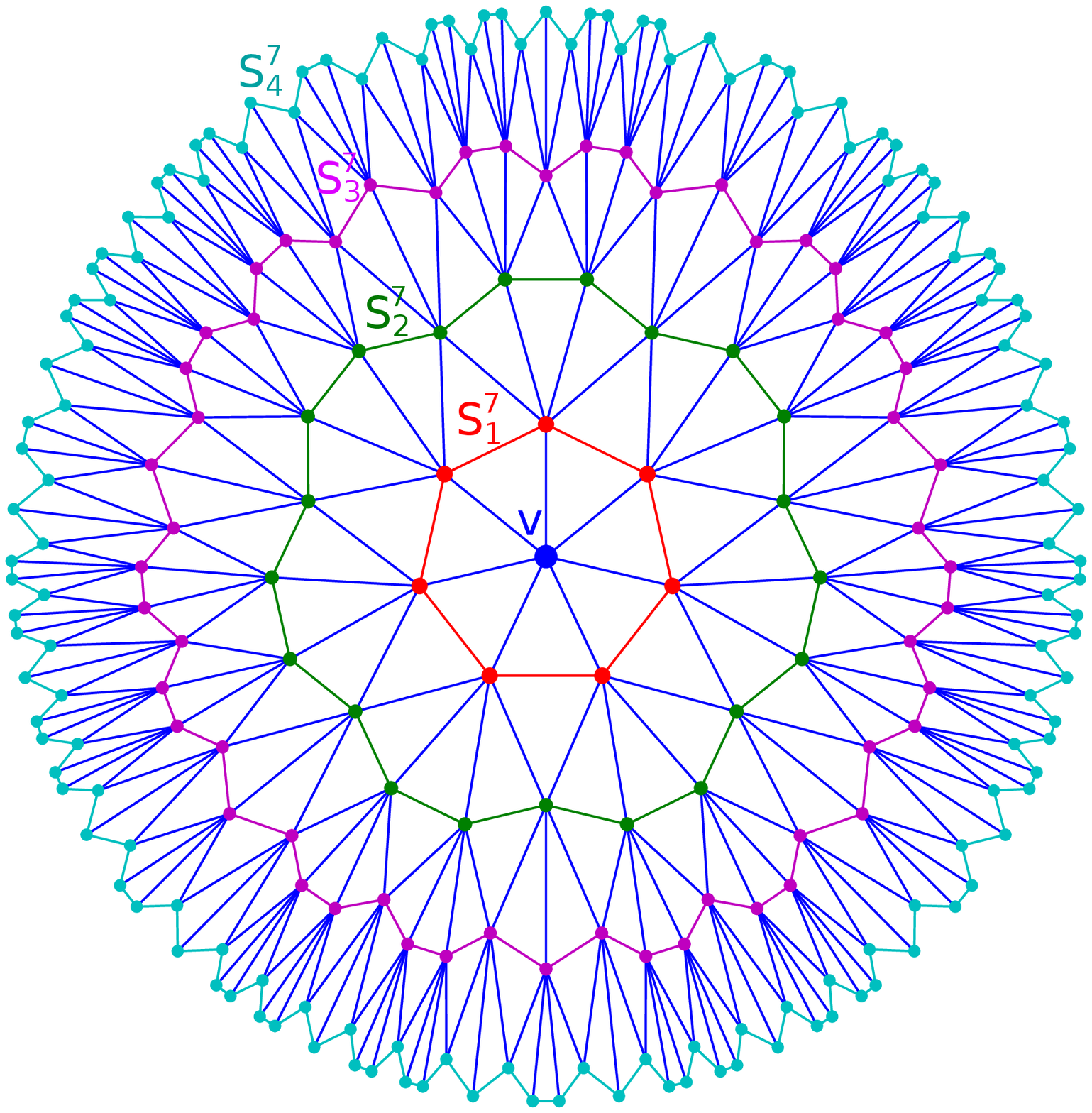}
    \caption{A minimal filling diagram of a $7$-uniform simplicial complex}
    \label{fig:all-7-levels-5}

    \vspace{10pt}

    \includegraphics[height=0.44\textheight]{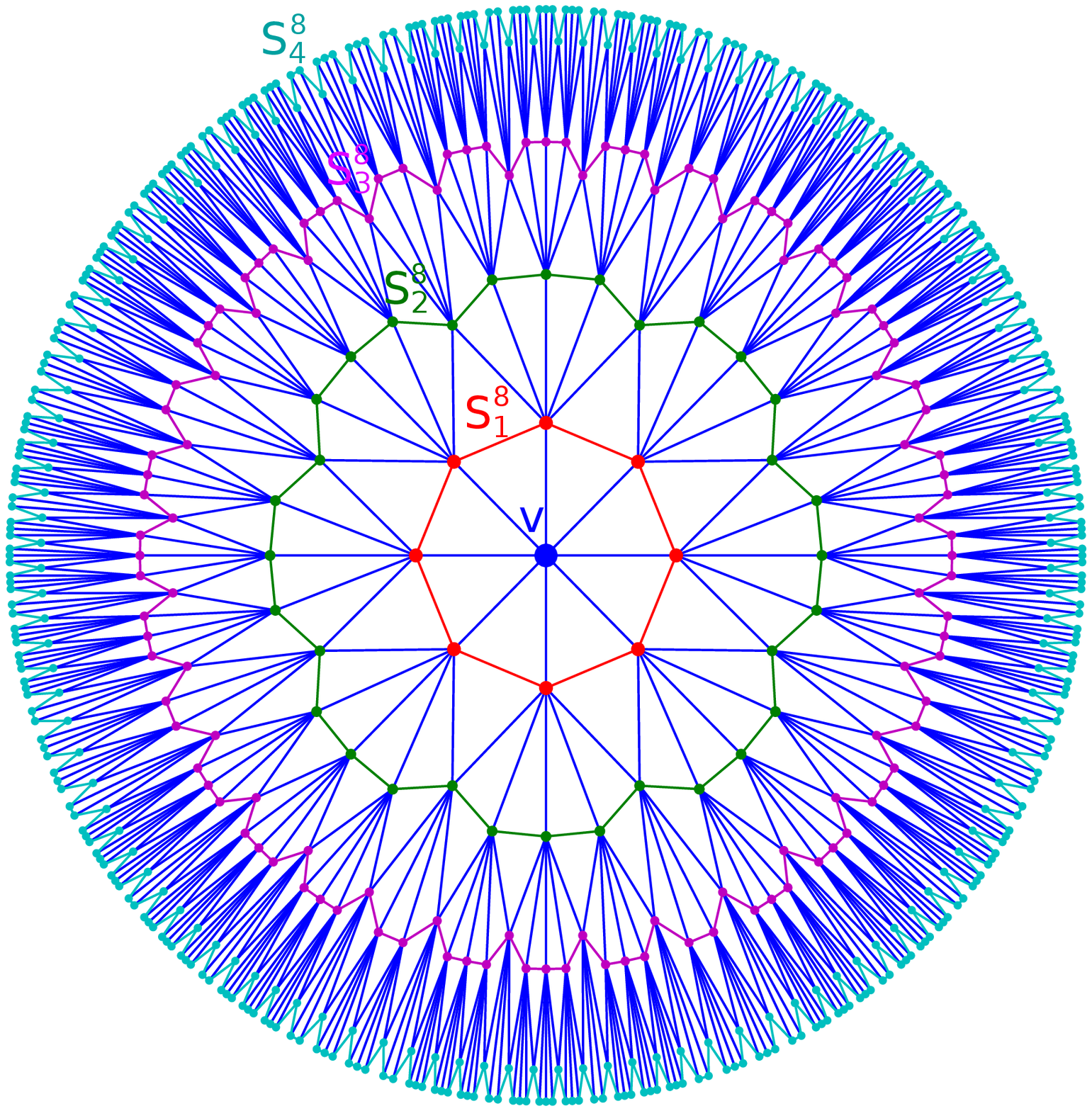}
    \caption{A minimal filling diagram of an $8$-uniform simplicial complex}
    \label{fig:all-8-levels-5}
\end{figure}

\subsection{Lucas sequences in t-uniform simplicial complexes}\label{sec:lucas-t-unif-connection}

We start by establishing, for $t \ge 6$, a connection between the lengths of spheres in $t$-uniform simplicial complexes and the terms of certain Lucas sequences.

\begin{theorem}\label{theorem:sphere-lucas-first-kind}
Let $t \ge 6$ and let $X$ be a $t$-uniform simplicial complex.
Let $\gamma$ be a loop of $X$ and let $(D,f)$ be a minimal filling diagram for $\gamma$.
Let $v$ be an interior vertex of $D$.
Then the lengths of the spheres centered at $v$ are Lucas sequences of the first kind $U_n$ with parameters $P = t-4$ and $Q = 1$ multiplied by $t$.
Namely,
\begin{equation}\label{eq:sphere-lucas-first-kind}
    |S_n^t| = t \cdot U_n(t-4, 1)
\end{equation}
\end{theorem}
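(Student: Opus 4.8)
The plan is to argue by induction on $n$, tracking how the sphere $S_n^t$ sits inside the disc $D$ and counting edges level by level. First I would record the base cases: $S_0^t$ is the empty edge set, so $|S_0^t| = 0 = t \cdot U_0(t-4,1)$; and since $v$ is an interior vertex with $|D_v| = t$, the sphere $S_1^t$ is the link cycle of $v$, which has exactly $t$ edges, matching $t \cdot U_1(t-4,1) = t$. The heart of the argument is the recurrence: I would show that for $n \ge 2$ the lengths satisfy $|S_n^t| = (t-4)\,|S_{n-1}^t| - |S_{n-2}^t|$, which is precisely the defining recurrence of $U_n(t-4,1)$, and then the claimed formula follows immediately by induction since multiplying a sequence satisfying a linear homogeneous recurrence by a constant preserves that recurrence.

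To establish the recurrence I would analyze the combinatorial structure between consecutive spheres. Fix a vertex $w$ on the sphere $S_{n-1}^t$ at distance $n-1$ from $v$. Because $(D,f)$ is minimal, hence locally large (by Lemma \ref{lemma:filling-diagram} and the geodesic structure of discs in $t$-uniform complexes), the link of $w$ in $D$ is a path, and its $t$ edges (if $w$ is interior) are partitioned according to whether the neighbouring vertex lies at distance $n-2$, $n-1$, or $n$ from $v$. The key local fact is that each interior vertex $w$ of $S_{n-1}^t$ has exactly two neighbours on $S_{n-2}^t \cup S_{n-1}^t$ lying "inward" along consecutive positions of its link — more precisely, the inward portion of the link of $w$ consumes a controlled number of edges — leaving exactly $t - 4$ edges pointing outward to $S_n^t$, while the two extreme edges of the link path join $w$ to its two neighbours on $S_{n-1}^t$ itself. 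Summing the outward edges over all vertices of $S_{n-1}^t$ counts each edge of $S_n^t$ together with correction terms coming from vertices of $S_n^t$ that are shared between outward fans of two adjacent vertices of $S_{n-1}^t$; these shared vertices are exactly counted by $|S_{n-2}^t|$ via the corner structure, which produces the $-|S_{n-2}^t|$ term. Equivalently, I would count triangles in the annulus between $S_{n-1}^t$ and $S_n^t$ in two ways and use Pick's formula together with $|D_v| = t$ to extract the relation.

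The main obstacle I anticipate is the careful bookkeeping of the annular region between $S_{n-1}^t$ and $S_n^t$: one must rule out degeneracies (a vertex of $S_n^t$ adjacent to three or more vertices of $S_{n-1}^t$, or two vertices of $S_{n-1}^t$ at distance $\ge 2$ along the sphere sharing an outward neighbour), and these are exactly the configurations that a full short cycle or a non-minimal face would create — so the exclusion uses minimality and nondegeneracy of $(D,f)$ in an essential way, likely together with the hypothesis $t \ge 6$ to guarantee that the outward fan $t-4$ is nonnegative and that the annulus is genuinely an annulus rather than collapsing. Once the local picture around each vertex of $S_{n-1}^t$ is pinned down, the summation is routine and the Lucas recurrence drops out; the passage from the recurrence for $|S_n^t|$ to the stated closed form $|S_n^t| = t\cdot U_n(t-4,1)$ is then immediate from the base cases and uniqueness of solutions to a second-order linear recurrence.
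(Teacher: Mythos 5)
Your overall strategy coincides with the paper's: check $|S_0^t|=0$, $|S_1^t|=t$, establish the recurrence $|S_n^t|=(t-4)\,|S_{n-1}^t|-|S_{n-2}^t|$ by a level-by-level count, and conclude by uniqueness of solutions of the linear recurrence. However, the ``key local fact'' on which your count rests is misstated, and the recurrence only comes out right because two errors cancel. A vertex $w$ of $S_{n-1}^t$ does \emph{not} always have two inward neighbours: it has either two adjacent neighbours on $S_{n-2}^t$ or exactly one, and accordingly it has either $t-4$ or $t-3$ neighbours on $S_n^t$ (besides its two neighbours on $S_{n-1}^t$). The paper's proof hinges precisely on this dichotomy: it splits $S_{n-1}^t$ into the set $Y_{n-1}^t$ of vertices with two inward neighbours and the set $Z_{n-1}^t$ of vertices with one, and uses the identity $|Y_{n-1}^t|=|S_{n-2}^t|$ (each $Y$-vertex sits over an edge of $S_{n-2}^t$), which is where the term $-|S_{n-2}^t|$ actually originates. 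Your correction term is likewise misattributed: the vertices of $S_n^t$ shared by the outward fans of two adjacent vertices of $S_{n-1}^t$ correspond to the \emph{edges of $S_{n-1}^t$}, so there are $|S_{n-1}^t|$ of them, not $|S_{n-2}^t|$.

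Concretely, the correct count is: total outward adjacencies $=(t-4)|Y_{n-1}^t|+(t-3)|Z_{n-1}^t|$, minus one shared vertex per edge of $S_{n-1}^t$, giving $(t-5)|Y_{n-1}^t|+(t-4)|Z_{n-1}^t|=(t-4)|S_{n-1}^t|-|S_{n-2}^t|$. Your version undercounts the outward adjacencies by $|Z_{n-1}^t|$ (by assigning every vertex only $t-4$ outward edges) and undercounts the shared vertices by the same amount (by taking $|S_{n-2}^t|$ instead of $|S_{n-1}^t|$), so the final formula agrees only by compensation; as written, the derivation does not establish the recurrence. Two minor further points: the link in $D$ of an interior vertex is a cycle, not a path (it is a path only for boundary vertices of $D$), and your alternative suggestion via Pick's formula and triangle counts in the annulus is not developed far enough to substitute for the vertex bookkeeping --- in the paper that annular triangle count appears separately (Lemma \ref{lemma:area-2-spheres}) and yields the area formula, not the length recurrence.
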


\begin{proof}
We consider consecutive spheres of $D$ centered at the same vertex $v$.
We count the number of vertices on each sphere.
In general the length of a sphere equals the number of vertices on the sphere.
The sphere made of a single vertex, however, is an exception (i.e. the sphere $S_0$ at distance $0$).
Namely, although the length of the sphere is $0$, the number of its vertices is equal to $1$.
Let $n \ge 2$.
In order to count the number of vertices on the sphere $S_n^t$, we split the vertices on $S_{n-1}^t$ into two sets:
\begin{itemize}
    \item the set $Y_{n-1}^t$ contains those vertices connected to two interior vertices on $S_{n-2}^t$ which are adjacent,
    \item the set $Z_{n-1}^t$ contains those vertices connected to one interior vertex on $S_{n-2}^t$.
\end{itemize}

The vertices from the set $Y_{n-1}^t$ are connected to two interior vertices on $S_{n-2}^t$, to two vertices on the same sphere $S_{n-1}^t$, and, because the complex is $t$-uniform, to other $t - 4$ vertices on the exterior sphere $S_n^t$.
The number of these vertices is equal to the number of edges on the sphere $S_{n-2}^t$.
We note that each vertex in $Y_{n-1}^{t}$ corresponds to an edge on $S_{n-2}^{t}$.
So $|Y_{n-1}^t| = |S_{n-2}^t|$.
The vertices from the set $Z_{n-1}^t$ are connected to one interior vertex on $S_{n-2}^t$, to two vertices on $S_{n-1}^t$, and, because the complex is $t$-uniform, to other $t - 3$ vertices on the exterior sphere $S_n^t$.
Any two vertices spanning an edge on $S_{n-1}^t$ are connected to the same vertex on $S_n^t$.
Therefore, in order to obtain the number of vertices on $S_n^t$, we have to count one vertex less for each vertex on $S_{n-1}^t$.

In conclusion, for $n \ge 2$, the number of vertices on $S_n^t$ is equal to
$$|S_n^t| = [(t - 4) - 1] \cdot |Y_{n-1}^t| + [(t - 3) - 1] \cdot |Z_{n-1}^t| =$$
$$= (t - 5) \cdot |Y_{n-1}^t| + (t - 4) \cdot |Z_{n-1}^t| =$$
$$= (t - 4) \cdot (|Y_{n-1}^t| + |Z_{n-1}^t|) - |Y_{n-1}^t| =$$
$$= (t - 4) \cdot |S_{n-1}^t| - |S_{n-2}^t|$$

The initial terms are $|S_0^t| = 0$ and $|S_1^t| = t$.
This implies that the lengths of spheres in $t$-uniform simplicial complexes are Lucas sequences of the first kind $U_n$ with parameters $P = t-4$ and $Q = 1$ multiplied by $t$: $|S_n^t| = t \cdot U_n(t-4, 1)$.
\end{proof}

Next we express the area between two consecutive spheres in terms of the lengths of these spheres.
We denote by $A_{k-1,k}^t$ the area between two consecutive spheres $S_{k-1}^{t}$ and $S_{k}^{t}$ centered at $v$.

\begin{lemma}\label{lemma:area-2-spheres}
Let $X$ be a $t$-uniform simplicial complex and let $\gamma$ be a loop in $X$.
Let $(D, f)$ be a minimal filling diagram for $\gamma$.
Let $v$ be an interior vertex of $D$.
Then the area between two consecutive spheres $S_{k-1}^t$ and $S_k^t$ centered at $v$ is equal to the sum of the lengths of these spheres
\begin{equation}\label{eq:ring-area-sum-spheres}
    A_{k-1,k}^t = |S_{k-1}^t| + |S_k^t|
\end{equation}
\end{lemma}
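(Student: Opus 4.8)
The plan is to compute the area between the two consecutive spheres by counting triangles via Pick's formula applied to the annular region, or more directly by tracking how triangles are created as one passes from $S_{k-1}^t$ to $S_k^t$. First I would recall from the proof of Theorem~\ref{theorem:sphere-lucas-first-kind} the decomposition of $S_{k-1}^t$ into the set $Y_{k-1}^t$ of vertices adjacent to an edge of $S_{k-2}^t$ and the set $Z_{k-1}^t$ of vertices adjacent to a single vertex of $S_{k-2}^t$, together with the identity $|Y_{k-1}^t| = |S_{k-2}^t|$ and the relation $|S_k^t| = (t-4)|S_{k-1}^t| - |S_{k-2}^t|$. The region between $S_{k-1}^t$ and $S_k^t$ is a triangulated annulus whose two boundary cycles have lengths $|S_{k-1}^t|$ and $|S_k^t|$ and which has no interior vertices (every vertex strictly between the two spheres lies on one of them). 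For such an annulus, an Euler-characteristic count gives that the number of triangles equals the total boundary length, i.e. $A_{k-1,k}^t = |S_{k-1}^t| + |S_k^t|$; this is the annular analogue of Pick's formula stated in the preliminaries.

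The key steps, in order, are: (1) argue that the closed region bounded by $S_{k-1}^t$ and $S_k^t$ is a genuine triangulated annulus — this uses that $(D,f)$ is minimal, hence simplicial and nondegenerate by Lemma~\ref{lemma:filling-diagram}(2), so distances behave well and the sphere cycles are embedded; (2) observe that this annulus has zero interior vertices, since a vertex at distance strictly between $k-1$ and $k$ cannot exist in the integer metric; (3) apply the Euler relation $V - E + F = \chi = 0$ for an annulus, with $V = |S_{k-1}^t| + |S_k^t|$, and use that each internal edge borders two triangles while each of the $|S_{k-1}^t| + |S_k^t|$ boundary edges borders one, together with $3F = 2E_{\text{int}} + E_{\text{bd}}$, to solve for $F = A_{k-1,k}^t$. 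Alternatively, and perhaps more in keeping with the combinatorial style of the previous proof, one can count triangles directly: each vertex of $Y_{k-1}^t$ is the apex of a fan of $t-3$ triangles pointing outward toward $S_k^t$, each vertex of $Z_{k-1}^t$ the apex of a fan of $t-2$ such triangles, triangles are shared along edges of $S_{k-1}^t$ and $S_k^t$, and bookkeeping these shared edges reproduces $|S_{k-1}^t| + |S_k^t|$ after substituting $|Y_{k-1}^t| + |Z_{k-1}^t| = |S_{k-1}^t|$ and the recurrence.

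The main obstacle I expect is step~(1): one must be careful that the region between two consecutive spheres really is an annulus and not something degenerate — for instance that $S_{k-1}^t$ does not touch or cross $S_k^t$, that each boundary cycle is a simple closed curve, and that the two cycles do not coincide. This requires the $t$-uniformity hypothesis (so that no interior vertex is "pinched") together with minimality of the diagram, and it is the place where the hypotheses are genuinely used; once the annulus structure is in hand, the count is the routine Euler/Pick computation sketched above. I would also need to handle the small-$k$ base cases (e.g. $k=1$, where $S_0^t$ is a single vertex rather than a cycle) separately, checking that the formula $A_{0,1}^t = |S_0^t| + |S_1^t| = t$ holds since the disc $S_1^t$ consists of exactly $t$ triangles around $v$.
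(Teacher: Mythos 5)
Your argument is correct, but it takes a genuinely different route from the paper. The paper counts the triangles in the annular region directly, by walking once around it: starting from an edge joining the two spheres, each triangle between $S_{k-1}^t$ and $S_k^t$ contains exactly one edge lying on one of the two spheres, so the triangles are in bijection with the edges of $S_{k-1}^t$ together with the edges of $S_k^t$, giving $A_{k-1,k}^t = |S_{k-1}^t| + |S_k^t|$ with no Euler characteristic and no recurrence. Your main argument instead observes that the annulus has no interior vertices (any vertex there would have integer distance strictly between $k-1$ and $k$) and then solves $V - E + F = 0$ together with $3F = 2E_{\mathrm{int}} + E_{\mathrm{bd}}$ to get $F = 2V - E_{\mathrm{bd}} = |S_{k-1}^t| + |S_k^t|$; this is a clean Pick-type count, and your explicit treatment of the base case $k=1$ (where the region is a disc of $t$ triangles, not an annulus) is something the paper omits. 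Your alternative fan count also checks out: $(t-3)|Y_{k-1}^t| + (t-2)|Z_{k-1}^t| - |S_{k-1}^t| = (t-3)|S_{k-1}^t| - |S_{k-2}^t| = |S_{k-1}^t| + |S_k^t|$ by the recurrence, though it invokes $t$-uniformity and Theorem \ref{theorem:sphere-lucas-first-kind}, which neither the paper's walk nor your Euler count actually needs. Both your proof and the paper's rest on the same structural assumption, which you rightly flag as the delicate point and the paper passes over silently: that the closed region between consecutive spheres is a genuine triangulated annulus whose boundary cycles are exactly the two spheres, with no chords among vertices of a single sphere and no degeneration; your version has the advantage of isolating this assumption explicitly and of noting where minimality and nondegeneracy (Lemma \ref{lemma:filling-diagram}) enter, while the paper's edge-to-triangle bijection is the more elementary and visual of the two computations once that structure is granted.
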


\begin{figure}[ht]
    \centering
    \includegraphics[width=0.4\textwidth]{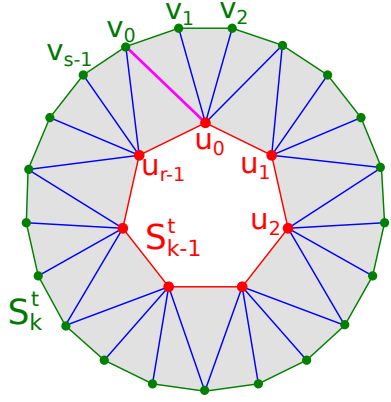}
    \caption{Area between two consecutive spheres centered at the same vertex}
    \label{fig:area-between-two-spheres}
\end{figure}

\begin{proof}
Assume that $S_{k-1}^t$ has $r$ vertices ($u_0$, $u_1$, ..., $u_{r-1}$), $S_k^t$ has $s$ vertices ($v_0$, $v_1$, ..., $v_{s-1}$), $u_0 \sim v_0$, and that the indices of the two sequences of vertices increase in the same direction.
We start counting the edges and the triangles between the spheres with the edge $\la u_0, v_0 \ra$ (see Figure \ref{fig:area-between-two-spheres}).
This edge is included either in the triangle $\la u_0, v_0, v_1 \ra$ which has an edge on $S_k^t$, or in the triangle $\la u_0, v_0, u_1 \ra$ which has an edge on $S_{k-1}^t$.
The figure below illustrates only the first case.
We continue the counting either with $\la u_0, v_1 \ra$ or with $\la u_1, v_0 \ra$.
This edge belongs to a triangle, which has an edge on $S_{k-1}^t$ or on $S_k^t$ and another edge between vertices on the two spheres.
The counting is complete once we return to the edge $\la u_0, v_0 \ra$.
For each edge joining vertices on both spheres $S_{k-1}^t$ and $S_k^t$, we count one triangle.
In conclusion the number of triangles between the spheres $S_{k-1}^t$ and $S_k^t$ is equal to the sum of the number of edges on both spheres.
The number of these triangles represents the area $A_{k-1,k}^t$.
\end{proof}

\begin{theorem}\label{theorem:area-in-sphere}
Let $X$ be a $t$-uniform simplicial complex and let $\gamma$ be a loop in $X$.
Let $(D, f)$ be a minimal filling diagram for $\gamma$.
Let $v$ be an interior vertex of $D$.
Then the area of a sphere centered at $v$ is equal to \begin{equation}\label{eq:area-sum-spheres}
    A_n^t = 2 \bigg(\sum_{k=0}^{n}{|S_k^t|}\bigg) - |S_n^t|
\end{equation}
\end{theorem}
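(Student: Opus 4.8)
The plan is to realise the disc bounded by $S_n^t$ as the union of the annular regions lying between consecutive spheres centered at $v$, and then to sum the contributions supplied by Lemma \ref{lemma:area-2-spheres}. Concretely, I would write $A_n^t$, the number of triangles of $D$ contained in the ball of radius $n$ about $v$, as $\sum_{m=1}^{n} A_{m-1,m}^t$, substitute $A_{m-1,m}^t = |S_{m-1}^t| + |S_m^t|$, and telescope.

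First I would justify the decomposition. Since a minimal filling diagram is simplicial and nondegenerate (Lemma \ref{lemma:filling-diagram}), the endpoints of any edge of $D$ are at distances from $v$ differing by at most $1$; consequently every triangle of $D$ that lies in the ball of radius $n$ has its three vertices at distances $\{m-1,m\}$ or $\{m,m+1\}$ from $v$ for a single value of $m$, so it lies between exactly one pair of consecutive spheres $S_{m-1}^t, S_m^t$ with $1 \le m \le n$. Hence these annular regions partition the ball of radius $n$ into disjoint pieces whose triangle counts are the $A_{m-1,m}^t$, giving $A_n^t = \sum_{m=1}^{n} A_{m-1,m}^t$ (with the convention $A_0^t = 0$, the ball of radius $0$ being a single vertex).

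Then the computation is a routine telescoping sum: using Lemma \ref{lemma:area-2-spheres},
\begin{align*}
A_n^t &= \sum_{m=1}^{n}\big(|S_{m-1}^t| + |S_m^t|\big)
      = \sum_{k=0}^{n-1}|S_k^t| + \sum_{k=1}^{n}|S_k^t| \\
      &= \Big(\sum_{k=0}^{n}|S_k^t| - |S_n^t|\Big) + \Big(\sum_{k=0}^{n}|S_k^t| - |S_0^t|\Big)
      = 2\Big(\sum_{k=0}^{n}|S_k^t|\Big) - |S_n^t|,
\end{align*}
where the last equality uses $|S_0^t| = 0$; this is exactly \eqref{eq:area-sum-spheres}. (The same identity can be obtained by induction on $n$: the base case $n=0$ reads $0 = 2|S_0^t| - |S_0^t|$, and the inductive step follows from $A_n^t = A_{n-1}^t + A_{n-1,n}^t$, Lemma \ref{lemma:area-2-spheres}, and the inductive hypothesis.)

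The only real obstacle is the first step — verifying that the triangles of the minimal filling diagram genuinely partition into the annuli between consecutive spheres, with no triangle straddling three distance levels and with the annuli covering the whole ball of radius $n$. Once this combinatorial fact is in place (it rests only on nondegeneracy of $f$ and the one‑Lipschitz behaviour of the distance‑to‑$v$ function along edges), everything else is the bookkeeping displayed above, and no properties of $t$-uniformity beyond Lemma \ref{lemma:area-2-spheres} are needed.
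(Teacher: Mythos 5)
Your proposal is correct and follows essentially the same route as the paper: decompose the ball of radius $n$ into the annuli between consecutive spheres, apply Lemma \ref{lemma:area-2-spheres}, and telescope using $|S_0^t|=0$. The only difference is that you spell out the justification that the annuli partition the triangles of the ball (via nondegeneracy and the $1$-Lipschitz distance to $v$), a step the paper simply asserts.
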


\begin{proof}
The area of a sphere is equal to the sum of the areas between each pair of consecutive spheres around $v$.
Namely,
$$A_n^t = \sum_{k=1}^n{A_{k-1,k}^t}$$
Therefore, using (\ref{eq:ring-area-sum-spheres}), we can express this area in terms of the lengths of the spheres around $v$ as follows:
$$A_n^t = \sum_{k=1}^n{A_{k-1,k}^t} = \sum_{k=1}^n{(|S_{k-1}^t| + |S_k^t|)}
= 2 \bigg(\sum_{k=0}^{n}{|S_k^t|}\bigg) - |S_n^t|$$
\end{proof}

\subsection{The bisection of the Fibonacci sequence in 7-uniform simplicial complexes}\label{sec:bisection-fib}

As a particular case, relation (\ref{eq:sphere-lucas-first-kind}) implies that spheres in a $7$-uniform simplicial complex are the terms of the bisection of the Fibonacci sequence multiplied by $7$.
Namely,
\begin{equation}\label{eq:sphere-fib}
    |S_n^7| = 7 \cdot U_n(3,1) = 7 \cdot B_n = 7 \cdot F_{2n}
\end{equation}

The sum of the first $n+1$ elements from the bisection of the Fibonacci sequence is equal to
$$\sum_{k=0}^{n}{B_k} = \sum_{k=0}^{n}{F_{2k}} =$$
$$= F_0 + F_2 + F_4 + ... + F_{2n} =$$
$$= F_1 + F_2 + F_4 + ... + F_{2n} - F_1 =$$
$$= F_3 + F_4 + F_6 + ... +F_{2n} - 1 =$$
$$= F_5 + F_6 + ... + F_{2n} - 1 = ... =$$
\begin{equation}\label{eq:bisection-to-fib}
    = F_{2n+1}-1
\end{equation}

Then the relations (\ref{eq:sphere-fib}) and (\ref{eq:bisection-to-fib}) imply that
\begin{equation}\label{eq:sum-7-over-last}
    \lim_{n\to\infty} \cfrac{\sum_{k=0}^{n}|S_k^7|}{|S_n^7|} =
    \lim_{n\to\infty} \cfrac{7 \cdot \sum_{k=0}^{n}F_{2k}}{7 \cdot F_{2n}} =
    \lim_{n\to\infty} \cfrac{F_{2n+1}-1}{F_{2n}} = \varphi
\end{equation}

From (\ref{eq:area-sum-spheres}) and (\ref{eq:sum-7-over-last}), we get the limit, as $n$ goes to infinity, of the ratio between the area and the length of spheres in $7$-uniform simplicial complexes.
Namely,
$$\lim_{n\to\infty} \cfrac{A_n^7}{|S_n^7|} =
\lim_{n\to\infty} \bigg( 2 \cdot \cfrac{\sum_{k=0}^{n}|S_k^7|}{|S_n^7|} - \cfrac{|S_n^7|}{|S_n^7|} \bigg) =$$
\begin{equation}\label{eq:area-7-over-last}
    = 2 \varphi - 1 = 2 \cdot \cfrac{1 + \sqrt{5}}{2} - 1 = \sqrt{5}
\end{equation}

As shown below (Theorem \ref{theorem:area-length-ratio-t-unif}), the sequence $\bigg(\cfrac{A_n^7}{|S_n^7|}\bigg)_{n \ge 0}$ is strictly increasing.
Therefore for spheres in $7$-uniform simplicial complexes, the following inequality holds:
\begin{equation}\label{eq:isoper-7-uniform}
    A_n^7 < \sqrt{5} \cdot |S_n^7|
\end{equation}

As above we split the vertices on each sphere $S_n^7$ into two sets:
\begin{itemize}
    \item the set $Y_{n}^7$ contains the vertices connected to two interior vertices on $S_{n-1}^7$ which are adjacent,
    \item the set $Z_{n}^7$ contains the vertices connected to one interior vertex on $S_{n-1}^7$.
\end{itemize}

We note that:
\begin{itemize}
    \item $|Y_{1}^7|$ = $7 * 0$,
    \item $|Z_{1}^7|$ = $7 * 1$,
    \item $|Y_{2}^7|$ = $7 * 1$,
    \item $|Z_{2}^7|$ = $7 * 2$,
    \item $|Y_{3}^7|$ = $7 * 3$,
    \item $|Z_{3}^7|$ = $7 * 5$.
\end{itemize}
So it turns out that the sequence ($|Y_{1}^7|$, $|Z_{1}^7|$, $|Y_{2}^7|$, $|Z_{2}^7|$, $|Y_{3}^7|$, $|Z_{3}^7|$, ...) is the Fibonacci sequence multiplied by 7.

\subsection{Lucas sequences for Q=1 in t-uniform simplicial complexes.}\label{sec:lucas-seq-t-unif}

Using Theorem \ref{theorem:sphere-lucas-first-kind}, we continue studying Lucas sequences of the first kind $U_n(P,1)$.
The recurrence relation is
$$U_n(P,1) = P \cdot U_{n-1}(P,1) - U_{n-2}(P,1)$$
The characteristic equation of the recurrence relation is
$$x^2 - P \cdot x + 1 = 0$$
It has the discriminant $D = P^2 - 4$ and the roots
\begin{equation}\label{eq:lucas-q-1-roots}
    a = \cfrac{P + \sqrt{P^2 - 4}}{2},\quad
    b = \cfrac{P - \sqrt{P^2 - 4}}{2}
\end{equation}

Thus $ab = 1$, and we have $b = \cfrac{1}{a}$.

We discuss two cases: either $P \neq 2$, or $P = 2$.

$\bullet$ If $P \neq 2$ then $D \neq 0$. Note that the roots $a$ and $b$ are distinct.
In this case the terms of the Lucas sequence of the first kind given in (\ref{eq:lucas-n-term-using-a-b}) can be expressed only in terms of $a$.
Namely,
$$U_n(P,1) = \cfrac{a^n - \cfrac{1}{a^n}}{a - \cfrac{1}{a}} =$$
$$= \cfrac{a}{a^2-1} \cdot \cfrac{(a^n-1)(a^n+1)}{a^n}$$

If $P > 2$ then $D > 0$.
In this case $a$ and $b$ are distinct real numbers and $a > 1 > b$.

If $P < 2$ then $D < 0$.
In this case $a$ and $b$ are distinct complex numbers.

For $P > 2$ the sum of the first $n+1$ elements of the Lucas sequence is
$$\sum_{k=0}^n{U_k}(P,1)
= \sum_{k=0}^n{\cfrac{a}{a^2-1} \cdot \bigg(a^k - \cfrac{1}{a^k} \bigg)} =$$
$$= \cfrac{a}{a^2-1} \cdot \bigg(\sum_{k=0}^n{a^k} - \sum_{k=0}^n{\cfrac{1}{a^k}}\bigg) =$$
$$= \cfrac{a}{a^2-1} \cdot \Bigg(\cfrac{a^{n+1}-1}{a-1} - \cfrac{1-\cfrac{1}{a^{n+1}}}{1-\cfrac{1}{a}}\Bigg) =$$
$$= \cfrac{a}{a^2-1} \cdot \bigg(\cfrac{a^{n+1}-1}{a-1} - \cfrac{a^{n+1}-1}{(a-1)a^n}\bigg) =$$
$$= \cfrac{a}{a^2-1} \cdot \cfrac{(a^{n+1}-1)(a^n-1)}{(a-1)a^n}$$

Moreover, the sum of the first $n+1$ elements of the Lucas sequence divided by the last element is
$$\cfrac{\sum_{k=0}^n{U_k}(P,1)}{U_n(P,1)} =
\cfrac{a}{a^2-1} \cdot \cfrac{(a^{n+1}-1)(a^n-1)}{(a-1)a^n} \cdot \cfrac{a^2-1}{a} \cdot \cfrac{a^n}{(a^n-1)(a^n+1)} =$$
\begin{equation}\label{eq:sum-lucas-over-last}
    = \cfrac{a^{n+1}-1}{(a-1)(a^n+1)}
\end{equation}

Since $a > 1$, the limit of the above expression when $n$ goes to infinity is
$$\lim_{n\to\infty} \cfrac{\sum_{k=0}^n{U_k(P,1)}}{U_n(P,1)} = \lim_{n\to\infty} \cfrac{a^{n+1}-1}{(a-1)(a^n+1)} =$$
$$= \lim_{n\to\infty} \cfrac{a^{n+1}-1}{a^{n+1} - a^n +a - 1} =$$
$$= \lim_{n\to\infty} \cfrac{1-\cfrac{1}{a^{n+1}}}{1 - \cfrac{1}{a} + \cfrac{1}{a^n} - \cfrac{1}{a^{n+1}}} =$$
$$= \cfrac{1}{1-\cfrac{1}{a}} = \cfrac{a}{a-1}$$ 

Using (\ref{eq:lucas-q-1-roots}), we can express the above result in terms of $P$ as follows:
$$\lim_{n\to\infty} \cfrac{\sum_{k=0}^n{U_k(P,1)}}{U_n(P,1)} =
\cfrac{\cfrac{P + \sqrt{P^2 - 4}}{2}}{\cfrac{P + \sqrt{P^2 - 4}}{2} - 1}
= \cfrac{P + \sqrt{P^2 - 4}}{P -2 + \sqrt{P^2 - 4}} =$$
$$= \cfrac{(P - 2 - \sqrt{P^2 - 4}) \cdot (P + \sqrt{P^2 - 4})}{(P - 2)^2 - \sqrt{P^2 - 4}^2} =$$
$$= \cfrac{P(P - 2) + (P-2)\sqrt{P^2 - 4} - P\sqrt{P^2 - 4} - \sqrt{P^2 - 4}^2}{P^2 - 4P + 4 - (P^2 - 4)} =$$
$$= \cfrac{P^2 - 2P - 2\sqrt{P^2 - 4} - P^2 + 4}{-4P + 8} =$$
$$= \cfrac{- 2P + 4 - 2\sqrt{P^2 - 4}}{-4(P - 2)} =$$
$$= \cfrac{P - 2 + \sqrt{(P-2)(P+2)}}{2(P - 2)} =$$
\begin{equation}\label{eq:lucas-lim-sum-over-last}
    = \cfrac{1 + \sqrt{\cfrac{P+2}{P-2}}}{2}
\end{equation}

$\bullet$ If $P = 2$ then $D = 0$. Note that $a = b = 1$.
Hence, $S = \cfrac{P}{2} = 1$.
Thus, using (\ref{eq:lucas-n-term-using-s}), we get
\begin{equation}\label{eq:lucas-2-1-n-term}
    U_n(2,1) = n S^{n-1}= n
\end{equation}

Then the sum of the first $n+1$ elements of the Lucas sequence is
\begin{equation}\label{eq:lucas-2-1-sum-over-n-term}
    \sum_{k=0}^n{U_k}(2,1) = \sum_{k=0}^n{k} = \cfrac{n(n+1)}{2}
\end{equation}

\begin{theorem}\label{theorem:area-length-ratio-t-unif}
In $t$-uniform simplicial complexes, $t \ge 7$, the following inequality holds:
\begin{equation}\label{eq:isoper-t-uniform}
    A_n^t < \sqrt{\cfrac{t-2}{t-6}} \cdot |S_n^t|
\end{equation}
In particular, the sequence $\bigg(\cfrac{A_{n}^{t}}{|S_{n}^{t}|}\bigg)_{n \ge 0}$ is strictly increasing.
\end{theorem}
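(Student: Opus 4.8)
The plan is to combine the formula $A_n^t = 2\big(\sum_{k=0}^n |S_k^t|\big) - |S_n^t|$ from Theorem \ref{theorem:area-in-sphere} with the identities \eqref{eq:sum-lucas-over-last} for the Lucas sequences $U_n(P,1)$, specialized to $P = t-4 > 2$ (so that $a>1>b$, $ab=1$, and the discriminant is positive). Dividing \eqref{eq:area-sum-spheres} by $|S_n^t| = t\cdot U_n(t-4,1)$ and using that the factor $t$ cancels, I would write
\begin{equation}\label{eq:ratio-closed-form}
    \cfrac{A_n^t}{|S_n^t|} = 2\cdot\cfrac{\sum_{k=0}^n U_k(t-4,1)}{U_n(t-4,1)} - 1 = \cfrac{2(a^{n+1}-1)}{(a-1)(a^n+1)} - 1,
\end{equation}
where $a = \big((t-4) + \sqrt{(t-4)^2-4}\big)/2$ is the larger root of $x^2 - (t-4)x + 1 = 0$. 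The limit as $n\to\infty$ is $\dfrac{2a}{a-1} - 1 = \dfrac{a+1}{a-1}$, which by the computation culminating in \eqref{eq:lucas-lim-sum-over-last} (with $P = t-4$, so $\tfrac{P+2}{P-2} = \tfrac{t-2}{t-6}$) equals $2\cdot\dfrac{1+\sqrt{(t-2)/(t-6)}}{2} - 1 = \sqrt{\dfrac{t-2}{t-6}}$. So the stated bound is exactly the limit of the ratio, and the strict inequality \eqref{eq:isoper-t-uniform} will follow once I show the sequence in \eqref{eq:ratio-closed-form} is strictly increasing and hence strictly below its supremum.

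The main work, then, is the monotonicity claim. From \eqref{eq:ratio-closed-form} it suffices to show that $g(n) := \dfrac{a^{n+1}-1}{(a-1)(a^n+1)}$ is strictly increasing in $n$ for $a>1$. I would compare $g(n+1)$ and $g(n)$ directly: the sign of $g(n+1)-g(n)$ is the sign of
\[
    (a^{n+2}-1)(a^n+1) - (a^{n+1}-1)(a^{n+1}+1) = a^{2n+2} + a^{n+2} - a^n - 1 - a^{2n+2} + 1 = a^{n+2} - a^n = a^n(a^2-1),
\]
which is strictly positive since $a > 1$. Hence $g$, and therefore the ratio $A_n^t/|S_n^t|$, is strictly increasing; combined with the fact that its limit is $\sqrt{(t-2)/(t-6)}$ and that a strictly increasing convergent sequence stays strictly below its limit, we obtain $A_n^t/|S_n^t| < \sqrt{(t-2)/(t-6)}$ for all $n$, which is \eqref{eq:isoper-t-uniform}.

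One small bookkeeping point I would address is the degenerate index $n=0$ (and possibly $n=1$): since $|S_0^t| = 0$, the ratio $A_0^t/|S_0^t|$ is not literally defined, so the statement "the sequence is strictly increasing" should be read as starting from the first index where $|S_n^t| \neq 0$, i.e. $n \ge 1$; alternatively one checks $A_1^t/|S_1^t| = (|S_0^t|+|S_1^t|)/|S_1^t| = 1$, which is consistent with $g(1) = \frac{a^2-1}{(a-1)(a+1)} = 1$ in \eqref{eq:ratio-closed-form}, so no special case is really needed for $n\ge 1$. I expect the only genuine obstacle to be making sure the closed-form expression \eqref{eq:sum-lucas-over-last} is applied with the correct parameter identification $P = t-4$ and that the chain of algebraic simplifications leading to \eqref{eq:lucas-lim-sum-over-last} is quoted rather than redone; everything else is the short positivity computation above.
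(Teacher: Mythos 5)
Your proposal is correct and follows essentially the same route as the paper: express $A_n^t/|S_n^t|$ via \eqref{eq:area-sum-spheres} and the closed form \eqref{eq:sum-lucas-over-last} with $P=t-4$, show monotonicity by the same algebraic comparison of consecutive terms (your difference $a^n(a^2-1)$ is the paper's $a^{n+1}-a^{n-1}$ up to an index shift), and deduce the strict bound from the limit $\sqrt{(t-2)/(t-6)}$. Your remark about the undefined ratio at $n=0$ is a fair bookkeeping point that the paper glosses over, but it does not change the argument.
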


\begin{proof}
Based on (\ref{eq:sphere-lucas-first-kind}), the lengths of spheres in a $t$-uniform simplicial complex are Lucas sequences of parameters $P = t-4$ and $Q = 1$ multiplied by $t$.
Therefore, using (\ref{eq:lucas-lim-sum-over-last}) we get the following
\begin{equation}\label{eq:spaheres-ratio-sum-last}
    \lim_{n\to\infty} \cfrac{\sum_{k=0}^n{|S_k^t|}}{|S_n^t|} =
    \lim_{n\to\infty} \cfrac{\sum_{k=0}^n{\big[ t \cdot U_k(t-4,1) \big] }}{t \cdot U_n(t-4,1)} =
    \cfrac{1 + \sqrt{\cfrac{t-2}{t-6}}}{2}
\end{equation}

Using (\ref{eq:area-sum-spheres}) and (\ref{eq:spaheres-ratio-sum-last}), for a sphere $S_{n}^{t}$ we get the limit, when $n$ goes to infinity, of the ratio between its area and its length.
Namely,
$$\lim_{n\to\infty} \cfrac{A_n^t}{|S_n^t|} =
\lim_{n\to\infty} \cfrac{2 \big(\sum_{k=0}^{n}{|S_k^t|}\big) - |S_n^t|}{|S_n^t|} =$$
$$= \lim_{n\to\infty} 2 \cdot \cfrac{\sum_{k=0}^{n}{|S_k^t|}}{|S_n^t|} - 1 =$$
$$= 2 \cdot \cfrac{1 + \sqrt{\cfrac{t-2}{t-6}}}{2} - 1
=\sqrt{\cfrac{t-2}{t-6}}$$

Also from (\ref{eq:sphere-lucas-first-kind}), (\ref{eq:area-sum-spheres}) and (\ref{eq:sum-lucas-over-last}), for $n > 0$, we have
$$\cfrac{A_n^t}{|S_n^t|} - \cfrac{A_{n-1}^t}{|S_{n-1}^t|}
= \cfrac{2 \big(\sum_{k=0}^{n}{|S_k^t|}\big) - |S_n^t|}{|S_n^t|}
- \cfrac{2 \big(\sum_{k=0}^{{n-1}}{|S_k^t|}\big) - |S_{n-1}^t|}{|S_{n-1}^t|} =$$
$$= 2 \bigg[\cfrac{7 \cdot \sum_{k=0}^{n}{U_k(t-4,1)}}{7 \cdot U_n(t-4,1)} - \cfrac{7 \cdot \sum_{k=0}^{n-1}{U_k(t-4,1)}}{7 \cdot U_{n-1}(t-4,1)}\bigg] =$$
$$= 2 \bigg[\cfrac{a^{n+1}-1}{(a-1)(a^n+1)}
- \cfrac{a^n-1}{(a-1)(a^{n-1}+1)}\bigg] =$$
$$= \cfrac{2}{a-1} \cdot \cfrac{a^{2n} + a^{n+1} - a^{n-1} - 1 - a^{2n} + 1}
{(a^n + 1)(a^{n-1} + 1)}$$
$$= \cfrac{2(a^{n+1} - a^{n-1})}{(a-1)(a^n + 1)(a^{n-1} + 1)}$$

As $a > 1$, it follows that $\cfrac{A_n^t}{|S_n^t|} - \cfrac{A_{n-1}^t}{|S_{n-1}^t|} > 0$.
So the sequence $\bigg(\cfrac{A_n^t}{|S_n^t|}\bigg)_{n \ge 0}$ is strictly increasing.

So for $t \ge 7$ in $t$-uniform simplicial complexes, we have
$A_n^t < \sqrt{\cfrac{t-2}{t-6}} \cdot |S_n^t|$.
\end{proof}

We note that for $7$-uniform simplicial complexes, Theorem \ref{theorem:area-length-ratio-t-unif} ensures that
relation (\ref{eq:isoper-7-uniform}) is indeed fulfilled:
$A_n^7 < \sqrt{5} \cdot |S_n^7|$.

Pick's formula implies that $A_n^t = |S_n^t| + 2 \cdot (V_{i} - 1)$.
We have denoted by $V_{i}$ the number of interior vertices of the disc enclosed by $S_{n}^{t}$.
Because $V_{i} \ge 1$, for $n \ge 1$, we have $A_n^t \ge |S_n^t|$.
Thus $\cfrac{A_n^t}{|S_n^t|} \ge 1$.
When $t$ goes to infinity, we get
$$\lim_{t\to\infty} \lim_{n\to\infty} \cfrac{A_n^t}{|S_n^t|} =
\lim_{t\to\infty} \sqrt{\cfrac{t-2}{t-6}} = 1$$

\begin{theorem}\label{theorem:area-length-ratio-6-unif}
In $6$-uniform simplicial complexes the following equality holds:
\begin{equation}\label{eq:isoper-6-uniform}
    A_n^6 = \cfrac{|S_n^6|^2}{6}
\end{equation}
\end{theorem}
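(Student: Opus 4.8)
The plan is to specialize the machinery of the previous subsections to $t=6$, where the relevant Lucas sequence degenerates. First I would invoke Theorem \ref{theorem:sphere-lucas-first-kind}: since $X$ is $6$-uniform, the sphere lengths are $|S_n^6| = 6 \cdot U_n(6-4,1) = 6 \cdot U_n(2,1)$. The point is that here the parameter is $P = t-4 = 2$, which is precisely the boundary case $D = P^2 - 4 = 0$ of the characteristic equation — the geometry is flat (the Euclidean triangular tessellation, Figure \ref{fig:all-6-levels-4}), so one expects quadratic rather than exponential growth. By the degenerate-case formula (\ref{eq:lucas-2-1-n-term}) we have $U_n(2,1) = n$, hence $|S_n^6| = 6n$.

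Next I would compute the area using Theorem \ref{theorem:area-in-sphere}, which gives $A_n^6 = 2\big(\sum_{k=0}^n |S_k^6|\big) - |S_n^6|$. Substituting $|S_k^6| = 6k$ and using the arithmetic-series identity (\ref{eq:lucas-2-1-sum-over-n-term}), namely $\sum_{k=0}^n k = n(n+1)/2$, the sum of sphere lengths is $\sum_{k=0}^n |S_k^6| = 6 \cdot \frac{n(n+1)}{2} = 3n(n+1)$. Therefore
\[
A_n^6 = 2 \cdot 3n(n+1) - 6n = 6n^2 + 6n - 6n = 6n^2.
\]

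Finally I would match this against the claimed right-hand side: $\cfrac{|S_n^6|^2}{6} = \cfrac{(6n)^2}{6} = \cfrac{36n^2}{6} = 6n^2$, which equals $A_n^6$, establishing (\ref{eq:isoper-6-uniform}). I do not anticipate a genuine obstacle here — the proof is a short substitution once the right earlier results are assembled. The only subtlety worth flagging explicitly is that Theorem \ref{theorem:sphere-lucas-first-kind} was stated for $t \ge 6$, so applying it at $t=6$ is legitimate, and that at $t=6$ one must use the $P=2$ branch of the Lucas analysis rather than the $P>2$ formulas used in Theorem \ref{theorem:area-length-ratio-t-unif}; conceptually this is exactly why the ratio $A_n^6/|S_n^6|$ is unbounded (growing like $n$) while $A_n^6/|S_n^6|^2$ is the constant $1/6$.
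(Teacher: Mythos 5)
Your proposal is correct and follows essentially the same route as the paper: it invokes Theorem \ref{theorem:sphere-lucas-first-kind} with $P=t-4=2$, uses the degenerate formulas (\ref{eq:lucas-2-1-n-term}) and (\ref{eq:lucas-2-1-sum-over-n-term}) to get $|S_n^6|=6n$ and $\sum_{k=0}^n |S_k^6|=3n(n+1)$, and substitutes into Theorem \ref{theorem:area-in-sphere}. The only cosmetic difference is that the paper computes the ratio $A_n^6/|S_n^6|^2=1/6$ directly, while you evaluate $A_n^6=6n^2$ and $|S_n^6|^2/6=6n^2$ separately and compare.
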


\begin{proof}
Theorem \ref{theorem:sphere-lucas-first-kind} implies that $|S_n^6| = 6 \cdot U_n(2, 1)$.
Then using (\ref{eq:area-sum-spheres}), (\ref{eq:lucas-2-1-n-term}) and (\ref{eq:lucas-2-1-sum-over-n-term}), we get
$$\cfrac{A_n^6}{|S_n^6|^2}
= \cfrac{2 \big(\sum_{k=0}^{n}{|S_k^6|}\big) - |S_n^6|}{|S_n^6|^2}
= \cfrac{2 \cdot 6 \cdot \big[\sum_{k=0}^{n}{U_k(2,1)}\big]
- 6 \cdot U_n(2,1)}{6^2 \cdot [U_n(2,1)]^2} =$$
$$= \cfrac{2 \cdot \cfrac{n(n+1)}{2} - n}{6 \cdot n^2} = \cfrac{1}{6}$$
So we have $A_n^6 = \cfrac{|S_n^6|^2}{6}$.
\end{proof}

We present a few conjectures.

\begin{conjecture}\label{conjecture-k-large}
For $k \ge 7$, $k$-large simplicial complexes satisfy a linear isoperimetric inequality.
Namely,
$$A < \sqrt{\cfrac{k-2}{k-6}} \cdot L$$
\end{conjecture}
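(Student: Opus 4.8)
The plan is to reduce the conjecture, via minimal filling diagrams, to a purely combinatorial statement about locally $k$-large triangulated discs, and then to argue that the extremal such disc is a piece of the $k$-uniform complex, for which Theorem \ref{theorem:area-length-ratio-t-unif} already supplies the constant. First I would take a homotopically trivial loop $\gamma$ of length $L$ in a $k$-large simplicial complex $X$ and a minimal filling diagram $(D,f)$ for $\gamma$, which exists and is simplicial and nondegenerate by Lemma \ref{lemma:filling-diagram}. A standard feature of minimal diagrams over $k$-large complexes (the references cited after Lemma \ref{lemma:filling-diagram}) is that $D$ is itself locally $k$-large; in particular every interior vertex of $D$ has at least $k$ neighbours. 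By Pick's formula $\mathrm{Area}\,\gamma = \mathrm{Area}\,D = L + 2(V_i-1)$, so the conjectured inequality is equivalent to the bound
$$V_i - 1 < \cfrac{1}{2}\left(\sqrt{\cfrac{k-2}{k-6}} - 1\right) L$$
on the number $V_i$ of interior vertices of a locally $k$-large disc whose boundary cycle has length $L$ (the cases with no interior vertex, and the finitely many small exceptional discs, being immediate).

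The heart of the argument is then to show that this bound is governed by the $k$-uniform case. The approach I would pursue is a combinatorial Gauss--Bonnet / discharging scheme: assign to every corner of every triangle of $D$ the angle $2\pi/k$, so that the curvature $\kappa(v) = 2\pi - (\deg v)\tfrac{2\pi}{k}$ of an interior vertex is non-positive, vanishing precisely when $\deg v = k$, i.e. at a ``regular'' vertex as in the $k$-uniform complex, while all positive curvature is concentrated on $\partial D$, where $\kappa(v) = \pi - m_v\tfrac{2\pi}{k}$ with $m_v$ the number of triangles at $v$. Summing curvatures, tracking the defect $\pi\tfrac{k-6}{k}$ of each triangle, and combining with the corner count $3\,\mathrm{Area}\,D = \sum_{\mathrm{int}} \deg v + \sum_\partial m_v$ should bound $V_i$ from above in terms of $L$, with equality forcing every interior vertex to be regular and the boundary to be ``geodesic'' --- that is, forcing $D$ to be a ball in the $k$-uniform tessellation. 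To close the loop one also needs, in the $k$-uniform case itself, the isoperimetric bound for \emph{all} boundary loops, not merely for spheres as in Theorem \ref{theorem:area-length-ratio-t-unif}; this is the monotonicity principle noted in the introduction (a loop of given length that is not a sphere bounds strictly less area), which I would prove by an exchange/rounding argument on layered diagrams.

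The main obstacle, I expect, is making the extremality in the second step quantitatively sharp. Weak hyperbolicity --- \emph{some} linear isoperimetric inequality for $k$-large complexes, $k \ge 7$ --- follows from the non-positivity of interior curvature by soft arguments; but recovering the \emph{optimal} constant $\sqrt{(k-2)/(k-6)}$ requires showing that an arbitrary locally $k$-large disc cannot grow faster, relative to its boundary, than the regular one, whose growth rate is the dominant root $a = \tfrac{1}{2}\big((k-4)+\sqrt{(k-4)^2-4}\big)$ of the characteristic equation $x^2 - (k-4)x + 1 = 0$ underlying (\ref{eq:sphere-lucas-first-kind}). Equivalently, one must rule out irregularities in the boundary that could inflate the area-to-length ratio past the uniform value. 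I would attempt this by peeling off the outermost layers of $D$ --- the interior vertices at distance $1$ from $\partial D$, then those at distance $2$, and so on --- showing each layer shrinks by at least the factor dictated by $a$ and that no boundary configuration outperforms the regular one; controlling the interaction of an irregular boundary with this layering, and verifying that the inequality is preserved under the surgery at every step, is where the genuine difficulty lies.
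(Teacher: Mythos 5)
The statement you are attempting is left \emph{open} in the paper: the author proves the sharp bound only for spheres inside minimal filling diagrams of $t$-uniform complexes (Theorem \ref{theorem:area-length-ratio-t-unif}), and Conjecture \ref{conjecture-k-large} is precisely the extrapolation of that bound to arbitrary loops in $k$-large complexes, with no proof offered. So there is no argument of the paper to compare yours against; your proposal must stand on its own, and as written it does not --- it is a strategy whose decisive steps are missing, as you yourself acknowledge.

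Concretely, there are two gaps. First, the reduction you describe (minimal filling diagram, local $k$-largeness of $D$ from the cited literature, Pick's formula, non-positive combinatorial curvature at interior vertices via angles $2\pi/k$) yields at best \emph{some} linear isoperimetric inequality; the whole content of the conjecture is the optimal constant $\sqrt{(k-2)/(k-6)}$. Your Gauss--Bonnet/discharging bookkeeping is not set up to produce that constant: turning the curvature count into the sharp bound on $V_i$ in terms of $L$ is exactly the extremality claim you defer (that no locally $k$-large disc grows faster relative to its boundary than the regular $k$-uniform one), and no argument is given for it --- the layer-peeling surgery is named but not carried out, and it is not checked that it preserves local $k$-largeness or the area-to-length ratio at each step. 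Second, even in the $k$-uniform case the paper controls only spheres; the ``monotonicity principle'' you invoke (a non-spherical loop of the same length bounds strictly less area) appears in the introduction only as an intuition, is not proved anywhere in the paper, and your ``exchange/rounding argument'' is again only a label. Until both of these are actually established, what you have is a plausible plan of attack on the conjecture, not a proof of it.
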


\begin{conjecture}\label{conjecture-7-located-k-large}
$6$-large simplicial complexes satisfy a quadratic isoperimetric inequality.
Namely,
$$A \le \cfrac{L^2}{6}$$
\end{conjecture}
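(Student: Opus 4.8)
The plan is to reduce the statement to a combinatorial isoperimetric inequality for triangulated discs and then to identify the flat hexagonal region as the extremal case. First I would pass to a minimal filling diagram: for a homotopically trivial loop $\gamma$ of length $L$ in a $6$-large complex $X$, Lemma \ref{lemma:filling-diagram} yields a minimal, simplicial, nondegenerate filling diagram $(D,f)$ with $A=\mathrm{Area}\,D$ and $|\partial D|=L$. The structural input I would establish (the systolic filling property of \cite{JS1}) is that $D$ is itself locally $6$-large, so that the link of every interior vertex is a cycle of length at least $6$ and hence every interior vertex has degree at least $6$. Pick's formula then gives $A=2V_i+L-2$, where $V_i$ is the number of interior vertices of $D$, so the desired inequality $A\le L^2/6$ is equivalent to the vertex bound
\begin{equation*}
V_i\le \frac{L^2}{12}-\frac{L}{2}+1.
\end{equation*}

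Next I would bring in the curvature supplied by $6$-largeness. Assigning the angle $\pi/3$ to each corner of each triangle, the combinatorial Gauss--Bonnet theorem reads $\sum_v\kappa(v)=2\pi\chi(D)=2\pi$, where the curvature at an interior vertex is $\kappa(v)=\tfrac{\pi}{3}\bigl(6-d(v)\bigr)\le 0$ by local $6$-largeness, and all remaining curvature is concentrated on $\partial D$. Thus $D$ is nonpositively curved in the combinatorial sense, and the flat $6$-uniform complex of Theorem \ref{theorem:area-length-ratio-6-unif}, in which every interior vertex has degree exactly $6$ and $\kappa\equiv 0$, is the borderline model where the target becomes the equality $A_n^6=|S_n^6|^2/6$.

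The heart of the argument, and the main obstacle, is to upgrade this local curvature bound to a global extremality statement. I would attempt a discrete coarea induction that erodes $D$ from its boundary: let the $k$-th inner sphere be the set of vertices at combinatorial distance $k$ from $\partial D$, peel off the outer layer to obtain a $6$-large disc with shorter boundary, and induct. Applying Gauss--Bonnet to each eroded sub-disc, nonpositive interior curvature should force the length of successive inner spheres to drop by at least the flat rate of $6$ per layer, and summing these layer contributions would reproduce the quadratic bound, with the round hexagonal spheres $S_n^6$ of Theorems \ref{theorem:sphere-lucas-first-kind} and \ref{theorem:area-in-sphere} saturating every inequality. The difficulty is that local degree data alone do not determine the area: a long flat strip and a round hexagon have identical interior links yet vastly different area-to-length ratios, so the induction must genuinely control the shape of the inner spheres, show that each erosion stays $6$-large, and prove that the per-layer length loss is at least $6$. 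Establishing this sharp per-layer decay is equivalent to the sharp vertex-isoperimetric inequality for the triangular lattice with optimal constant $1/6$, and it is precisely the step that keeps the statement a conjecture; once it is available, the extremal case is exactly the $6$-uniform computation of Theorem \ref{theorem:area-length-ratio-6-unif}.
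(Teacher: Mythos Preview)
The statement you are addressing is labeled a \emph{Conjecture} in the paper, not a theorem: the paper offers no proof at all, only the heuristic evidence of Theorem~\ref{theorem:area-length-ratio-6-unif} that the bound $A\le L^2/6$ is attained with equality on the round hexagonal discs $S_n^6$. There is therefore no argument in the paper to compare your proposal against.

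Taken on its own terms, your outline is a sensible strategy and you are candid about where it breaks. The reduction to a minimal filling diagram, the fact that $D$ is itself locally $6$-large (this is indeed the content of \cite{JS1}, Lemma~1.7, and \cite{Ch}), the translation via Pick's formula to the bound $V_i\le L^2/12 - L/2 + 1$, and the combinatorial Gauss--Bonnet identity with angle $\pi/3$ are all correct and standard. The genuine gap is exactly the one you name: turning nonpositive interior curvature into a sharp per-layer length decay of at least~$6$. One difficulty you slightly underplay is topological: after peeling a layer from $\partial D$, the remaining region need not be a single disc --- it can disconnect, pinch, or degenerate --- so the induction must track a collection of sub-discs and account for the boundary curvature redistributed at pinch points. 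This is where a naive erosion argument typically loses the optimal constant. The paper itself cites \cite{E1} for explicit isoperimetric constants in systolic discs, so the literature already contains results in this direction; whether the sharp constant $1/6$ is actually established there, or only a weaker one, is presumably why the author still records the sharp form as a conjecture.
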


We note that for $t = 4$ and $t = 5$, relation (\ref{eq:sphere-lucas-first-kind}) holds only for the initial values.
Namely, for $t = 4$ (Figure \ref{fig:octahedron}) let the octahedron be an example of $4$-uniform simplicial complex.
The largest possible sphere in a minimal filling diagram for a loop in the complex is $S_1^4$.
Each of the four vertices on $S_1^4$ are connected to $3$ vertices (one is the central vertex; the other two are on the same sphere $S_1^4$).
So because the complex is $4$-uniform, these vertices are connected to a single vertex on $S_{2}^{4}$.
This implies that $S_2^4$ contains a single vertex.
Such a situation is not possible in a minimal filling diagram because a disc is flat.
 
\begin{figure}[ht]
    \centering
    \begin{subfigure}[b]{.5\textwidth}
        \centering
        \includegraphics[width=.45\textwidth]{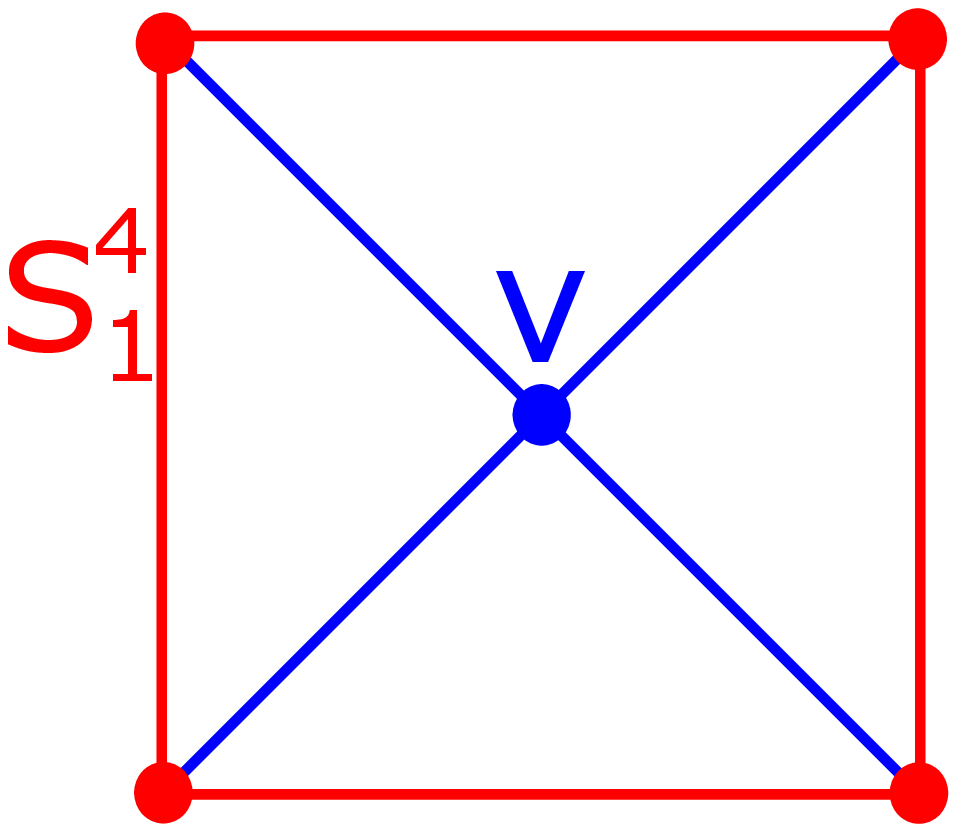}
        \caption{A minimal filling diagram}
    \end{subfigure}%
    \begin{subfigure}[b]{.5\textwidth}
        \centering
        \includegraphics[width=.7\textwidth]{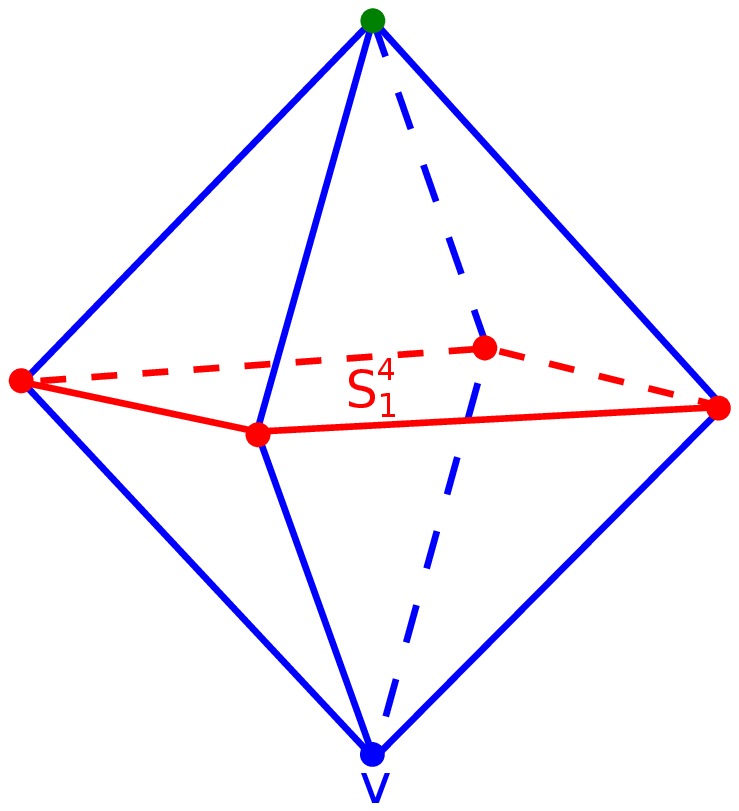}
        \caption{The octahedron}
    \end{subfigure}%
    \caption{A $4$-uniform simplicial complex}
    \label{fig:octahedron}
\end{figure}

One can reason similarly for $t = 5$ (Figure \ref{fig:icosahedron}).
Let the icosahedron with a missing vertex be an example of $5$-uniform simplicial complex.
The largest possible sphere in a minimal filling diagram for a loop in the complex is $S_2^5$.
Each of the five vertices on $S_1^5$ are connected to three vertices (one is the central vertex; the other two are on the same sphere $S_1^5$).
Because the complex is $5$-uniform, each of these vertices is connected to two of the five vertices on $S_2^5$.
The vertices on $S_2^5$ are connected to four vertices (two are on $S_1^5$; the other two are on $S_2^5$).
Because the complex is $5$-uniform, these vertices are connected to a single vertex on $S_3^5$.
This implies that the sphere $S_3^5$ contains a single vertex.
This is not possible in a minimal filling diagram because a disc is flat.

\begin{figure}[ht]
    \centering
    \begin{subfigure}[b]{.5\textwidth}
        \centering
        \includegraphics[width=.7\textwidth]{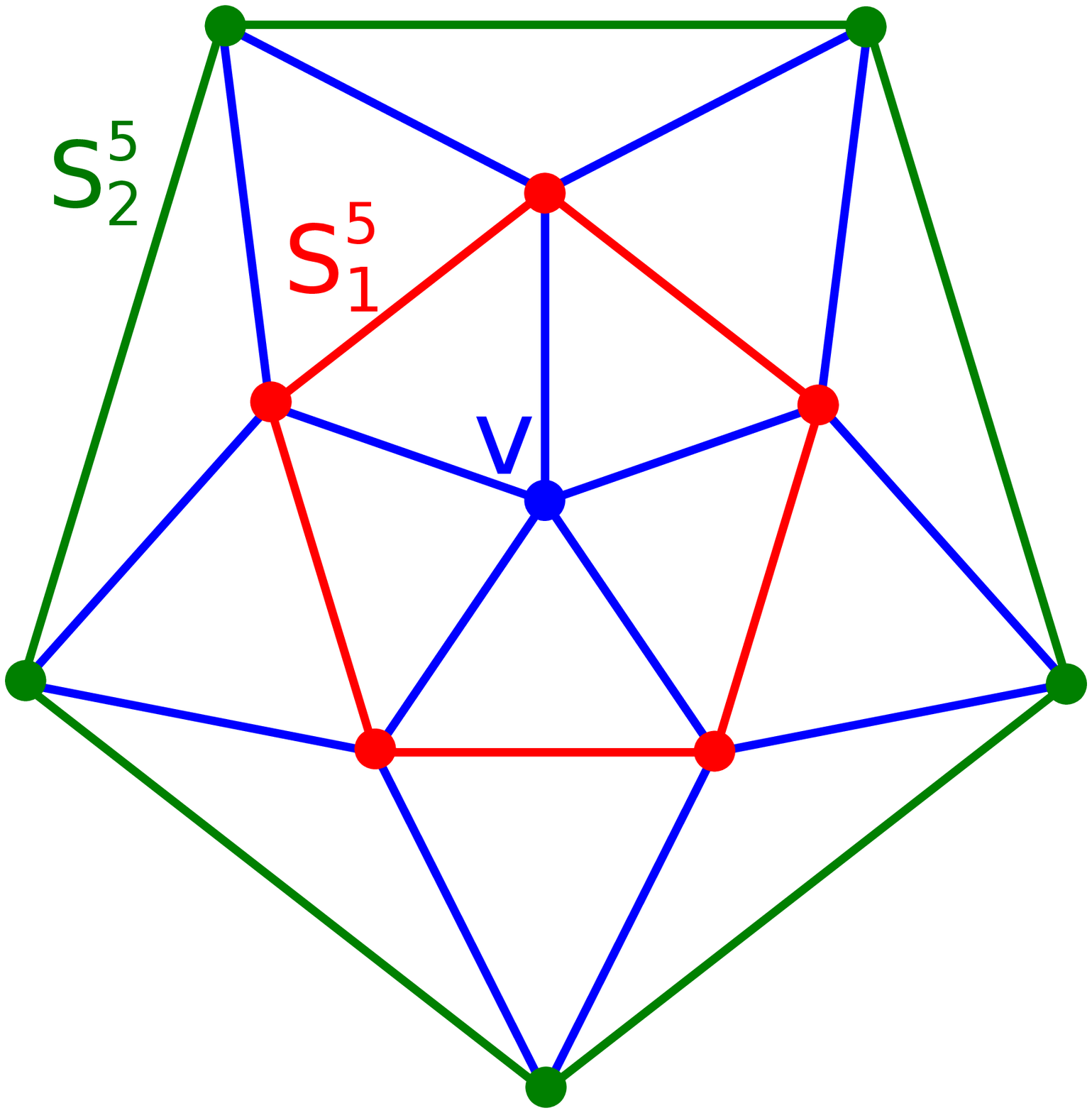}
        \caption{A minimal filling diagram}
    \end{subfigure}%
    \begin{subfigure}[b]{.5\textwidth}
        \centering
        \includegraphics[width=.7\textwidth]{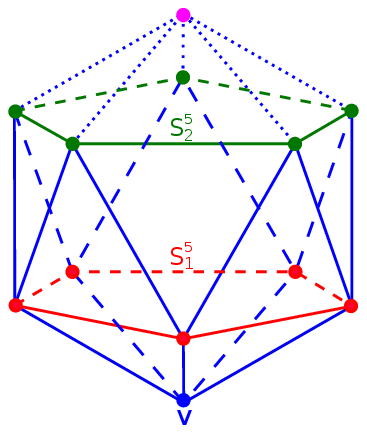}
        \caption{The icosahedron (with a missing vertex)}
    \end{subfigure}%
    \caption{A $5$-uniform simplicial complex}
    \label{fig:icosahedron}
\end{figure}

\subsection{Tables of sequences}\label{sec:tables-of-sequences}

We end by computing sequences of lengths of spheres, sum of lengths of spheres, areas between spheres and areas inside spheres in $t$-uniform simplicial complexes.
The values are divided by $t$ to outline the relation between these sequences and other integer sequences.
We introduce a table line to include approximations of the ratio between the area and the length of spheres.
For $t = 6$, an extra line represents the quadratic equality between the area and the length of spheres.
In the last table column we give links to some related entries in the \href{https://oeis.org}{On-Line Encyclopedia of Integer Sequences} (OEIS).

We start by presenting the tables for $6 \le t \le 10$.

\renewcommand{\arraystretch}{2}
\begin{table}[ht]
    \centering
    \begin{tabular}{|r l||r|r|r|r|r|r|r|r|r|r|}
         \hline
         n                             &      & 0 & 1 & 2 & 3 &  4 &  5 &  6 &  7 &  8 \\
         \hline \hline
         $|S_n^6|$                     & $/6$ & 0 & 1 & 2 & 3 &  4 &  5 &  6 &  7 &  8 \\
         \hline
         $\sum_{k=0}^{n} |S_k^6|$      & $/6$ & 0 & 1 & 3 & 6 & 10 & 15 & 21 & 28 & 36 \\
         \hline
         $A_{n-1,n}^6$                 & $/6$ & - & 1 & 3 & 5 &  7 &  9 & 11 & 13 & 15 \\
         \hline
         $A_n^6$                       & $/6$ & 0 & 1 & 4 & 9 & 16 & 25 & 36 & 49 & 64 \\
         \hline
         $\cfrac{A_n^6}{|S_n^6|}$      &      & - & 1 & 2 & 3 &  4 &  5 &  6 &  7 &  8 \\
         \hline
         $\cfrac{6 \cdot A_n^6}{|S_n^6|^2}$ & & - & 1 & 1 & 1 &  1 &  1 &  1 &  1 &  1 \\
         \hline
    \end{tabular}
    \caption{Sequences in a $6$-uniform simplicial complex}
    \label{tab:6-uniform}

    \begin{tabular}{|r l||r|r|r|r|r|r|r|r|r||r|}
         \hline
         n                         &      & 0 & 1 &     2 &  3 &    4 &   5 &     6 &    7 &     8 & 
         OEIS \\
         \hline \hline
         $|S_n^7|$                 & $/7$ & 0 & 1 &     3 &  8 &   21 &  55 &   144 &  377 &   987 &
         \href{https://oeis.org/A001906}{A001906} \\
         \hline
         $\sum_{k=0}^{n} |S_k^7|$  & $/7$ & 0 & 1 &     4 & 12 &   33 &  88 &   232 &  609 &  1596 &
         \href{https://oeis.org/A027941}{A027941} \\
         \hline
         $A_{n-1,n}^7$             & $/7$ & - & 1 &     4 & 11 &   29 &  76 &   199 &  521 &  1364 &
         \href{https://oeis.org/A002878}{A002878} \\
         \hline
         $A_n^7$                   & $/7$ & 0 & 1 &     5 & 16 &   45 & 121 &   320 &  841 &  2205 &
         \href{https://oeis.org/A004146}{A004146} \\
         \hline
         $\cfrac{A_n^7}{|S_n^7|}$  &      & - & 1 & 1.(6) &  2 & 2.14 & 2.2 & 2.(2) & 2.23 & 2.234 & 
         $\rightarrow \sqrt{5}$ \\
         \hline
    \end{tabular}
    \caption{Sequences in a $7$-uniform simplicial complex}
    \label{tab:7-uniform}

    \begin{tabular}{|r l||r|r|r|r|r|r|r|r||r|}
         \hline
         n                         &      & 0 & 1 &   2 &     3 &    4 &    5 &    6 &     7 &
         OEIS \\
         \hline \hline
         $|S_n^8|$                 & $/8$ & 0 & 1 &   4 &    15 &   56 &  209 &  780 &  2911 &
         \href{https://oeis.org/A001353}{A001353} \\
         \hline
         $\sum_{k=0}^{n} |S_k^8|$  & $/8$ & 0 & 1 &   5 &    20 &   76 &  285 & 1065 &  3976 &
         \href{https://oeis.org/A061278}{A061278} \\
         \hline
         $A_{n-1,n}^8$             & $/8$ & - & 1 &   5 &    19 &   71 &  265 &  989 &  3691 &
         \href{https://oeis.org/A001834}{A001834} \\
         \hline
         $A_n^8$                   & $/8$ & 0 & 1 &   6 &    25 &   96 &  361 & 1350 &  5041 &
         \href{https://oeis.org/A092184}{A092184} \\
         \hline
         $\cfrac{A_n^8}{|S_n^8|}$  &      & - & 1 & 1.5 & 1.(6) & 1.71 & 1.72 & 1.73 & 1.731 &
         $\rightarrow \sqrt{3}$ \\
         \hline
    \end{tabular}
    \caption{Sequences in an $8$-uniform simplicial complex}
    \label{tab:8-uniform}
\end{table}
\renewcommand{\arraystretch}{1}

\clearpage

\renewcommand{\arraystretch}{2}
\begin{table}[ht]
    \centering
    \begin{tabular}{|r l||r|r|r|r|r|r|r||r|}
         \hline
         n                         &      & 0 & 1 &   2 &   3 &    4 &     5 &     6 &
         OEIS \\
         \hline \hline
         $|S_n^9|$                 & $/9$ & 0 & 1 &   5 &  24 &  115 &   551 &  2640 &
         \href{https://oeis.org/A004254}{A004254} \\
         \hline
         $\sum_{k=0}^{n} |S_k^9|$  & $/9$ & 0 & 1 &   6 &  30 &  145 &   696 &  3336 &
         \href{https://oeis.org/A089817}{A089817} \\
         \hline
         $A_{n-1,n}^9$             & $/9$ & - & 1 &   6 &  29 &  139 &   666 &  3191 &
         \href{https://oeis.org/A030221}{A030221} \\
         \hline
         $A_n^9$                   & $/9$ & 0 & 1 &   7 &  36 &  175 &   841 &  4032 &
         \href{https://oeis.org/A054493}{A054493} \\
         \hline
         $\cfrac{A_n^9}{|S_n^9|}$  &      & - & 1 & 1.4 & 1.5 & 1.52 & 1.526 & 1.527 &
         $\rightarrow \sqrt{\cfrac{7}{3}}$ \\
         \hline
    \end{tabular}
    \caption{Sequences in a $9$-uniform simplicial complex}
    \label{tab:9-uniform}

    \begin{tabular}{|r l||r|r|r|r|r|r||r|}
         \hline
         n                              &       & 0 & 1 &     2 &   3 &    4 &     5 &
         OEIS \\
         \hline \hline
         $|S_n^{10}|$                   & $/10$ & 0 & 1 &     6 &  35 &  204 &  1189 &
         \href{https://oeis.org/A001109}{A001109} \\
         \hline
         $\sum_{k=0}^{n} |S_k^{10}|$    & $/10$ & 0 & 1 &     7 &  42 &  246 &  1435 &
         \href{https://oeis.org/A053142}{A053142} \\
         \hline
         $A_{n-1,n}^{10}$               & $/10$ & - & 1 &     7 &  41 &  239 &  1393 &
         \href{https://oeis.org/A002315}{A002315} \\
         \hline
         $A_n^{10}$                     & $/10$ & 0 & 1 &     8 &  49 &  288 &  1681 &
         \href{https://oeis.org/A001108}{A001108} \\
         \hline
         $\cfrac{A_n^{10}}{|S_n^{10}|}$ &       & - & 1 & 1.(3) & 1.4 & 1.41 & 1.413 &
         $\rightarrow \sqrt{2}$ \\
         \hline
    \end{tabular}
    \caption{Sequences in a $10$-uniform simplicial complex}
    \label{tab:10-uniform}
\end{table}
\renewcommand{\arraystretch}{1}

Next we present the tables for $t = 4$ and $t = 5$.
For $t = 4$ the only valid values for $n$ are $0$ and $1$.
For $t = 5$ the only valid values for $n$ are $0$, $1$ and $2$.
Larger values for $n$ are not valid.
Still we present them to see what values we would get in case such spheres would exist.
We note that the lengths of spheres would also be the terms of certain Lucas sequences.

\clearpage

\renewcommand{\arraystretch}{1.8}
\begin{table}[ht]
    \centering
    \begin{tabular}{|r l||r|r||d|d|d|d|d|d|d|}
         \hline
         n                          &      & 0 & 1 &        2 &  3 &  4 & 5 &        6 &  7 &  8 \\
         \hline \hline
         $|S_n^4|$                  & $/4$ & 0 & 1 &        0 & -1 &  0 & 1 &        0 & -1 &  0 \\
         \hline
         $\sum_{k=0}^{n} |S_k^4|$   & $/4$ & 0 & 1 &        1 &  0 &  0 & 1 &        1 &  0 &  0 \\
         \hline
         $A_{n-1,n}^4$              & $/4$ & - & 1 &        1 & -1 & -1 & 1 &        1 & -1 & -1 \\
         \hline
         $A_n^4$                    & $/4$ & 0 & 1 &        2 &  1 &  0 & 1 &        2 &  1 &  0 \\
         \hline
         $\cfrac{A_n^4}{|S_n^4|}$   &      & - & 1 & $\infty$ & -1 &  - & 1 & $\infty$ & -1 &  - \\
         \hline
    \end{tabular}
    \caption{Sequences in a $4$-uniform simplicial complex}
    \label{tab:4-uniform}

    \begin{tabular}{|r l||r|r|r||d|d|d|d|d|d|d|}
         \hline
         n                         &      & 0 & 1 & 2 &        3 &  4 &  5 &      6 & 7 & 8 \\
         \hline \hline
         $|S_n^5|$                 & $/5$ & 0 & 1 & 1 &        0 & -1 & -1 &      0 & 1 & 1 \\
         \hline
         $\sum_{k=0}^{n} |S_k^5|$  & $/5$ & 0 & 1 & 2 &        2 &  1 &  0 &      0 & 1 & 2 \\
         \hline
         $A_{n-1,n}^5$             & $/5$ & - & 1 & 2 &        1 & -1 & -2 &     -1 & 1 & 2 \\
         \hline
         $A_n^5$                   & $/5$ & 0 & 1 & 3 &        4 &  3 &  1 &      0 & 1 & 3 \\
         \hline
         $\cfrac{A_n^5}{|S_n^5|}$  &      & - & 1 & 3 & $\infty$ & -3 & -1 &      - & 1 & 3 \\
         \hline
    \end{tabular}
    \caption{Sequences in a $5$-uniform simplicial complex}
    \label{tab:5-uniform}
\end{table}
\renewcommand{\arraystretch}{1}

\begin{bibdiv}
\begin{biblist}

\bib{BH}{article}{
    author={Bridson, M.},
    author={Haefliger, A.},
    title={Metric spaces of nonpositive curvature},
    journal={Grundlehren der mathematischen Wissenschaften $319$},
    volume={Springer},
    date={1999},
}

\bib{ChaCHO}{article}{
    author={Chalopin, J.},
    author={Chepoi, V.},
    author={Hirai,H.},
    author={Osajda, D.},
    title={Weakly modular graphs and nonpositive curvature},
    status    ={preprint},
    eprint    ={arXiv:1409.3892},
    date      ={2014}
}

\bib{Ch}{article}{
    author={Chepoi, V.},
    title={Graphs of some CAT(0) complexes},
    journal={Adv. in Appl. Math.},
    volume={24},
    date={2000},
    number={2},
    pages={125--179},
}

\bib{E1}{article}{
    author={Elsener, T.},
    author={},
    title={Flats and flat torus theorem in systolic spaces},
    journal={Geometry and Topology},
    volume={13},
    date={2009},
    number={},
    pages={661--698},
    issn={},
}

\bib{JS1}{article}{
    author={Januszkiewicz, T.},
    author={{\'S}wi{\c{a}}tkowski, J.},
    title={Simplicial nonpositive curvature},
    journal={Publ. Math. Inst. Hautes \'Etudes Sci.},
    number={104},
    date={2006},
    pages={1--85},
    issn={0073-8301},
}

\bib{JS2}{article}{
    author={Januszkiewicz, T.},
    author={{\'S}wi{\c{a}}tkowski, J.},
    title={Filling invariants of systolic complexes and groups},
    journal={Geom. Topol.},
    volume={11},
    date={2007},
    pages={727--758},
    issn={1465-3060},
}

\bib{L-8loc}{article}{
    title={A combinatorial negative curvature condition implying Gromov hyperbolicity},
    author={Laz\u{a}r, I.-C.},
    status={},
    journal={arXiv:1501.05487v3},
    date={2015}
}

\bib{L-8loc2}{article}{
    title={Minimal filling diagrams lemma for $5/9$-complexes},
    author={Laz\u{a}r, I.-C.},
    status={},
    journal={arXiv:1509.03760},
    date={2015}
}

\bib{O-8loc}{article}{
    author={Osajda, D.},
    title={Combinatorial negative curvature and triangulations of three-manifolds},
    journal={Indiana Univ. Math. J.},
    volume={64},
    date={2015},
    number={3},
    pages={943--956},
}

\bib{Pr}{article}{
    title={Infinite systolic groups are not torsion},
    author={Prytula, T.},
    status={},
    journal={arXiv:1402.4421v2},
    date={2014}
}

\bib{Ri}{book}{
    title={My Numbers, My Friends: Popular Lectures on Number Theory},
    author={Ribenboim, P.},
    publisher={Springer-Verlag New York, Inc.},
    date={2000}
}

\end{biblist}
\end{bibdiv}

\end{document}